\documentclass[final,a4paper,11pt]{article}
\usepackage{graphicx}
\usepackage{amssymb,amsmath, amsfonts,amsthm}
\usepackage{a4wide}
\usepackage{color}

\newcommand{\green}[1]{\textcolor{green}{#1}}
\newcommand{\R}{\mathbb{R}}

\newcommand{\norm}[1]{\|#1\|}

\newcommand{\dist}[1]{{\rm dist}(#1)}
\newcommand{\distb}[1]{{\rm dist}\big(#1\big)}
\newcommand{\mv}{\,\mid\,}

\newcommand{\K}{{\cal K}}

\newcommand{\F}{{\cal F}}
\newcommand{\Lag}{{\cal L}}

\newcommand{\xb}{\bar x}

\newcommand{\zb}{\bar z}

\newcommand{\pb}{\bar p}
\newcommand{\lb}{{\bar\lambda}}

\newcommand{\oo}{o}
\newcommand{\OO}{{O}}

\newcommand{\lin}{{\rm lin\,}}

\newcommand{\gph}{\mathrm{gph}\,}

\newcommand{\Gr}{{\rm gph\,}}
\newcommand{\Limsup}{\mathop{{\rm Lim}\,{\rm sup}}}
\def\h{\hfill\Box}

\def\dom{\mbox{\rm dom}\,}

\def\tilde{\widetilde}

\newlength{\myparboxwidth}\setlength{\myparboxwidth}{\textwidth}\addtolength{\myparboxwidth}{-2cm}
\if{
\newtheorem{theorem}{Theorem}[section]
\newtheorem{proposition}[theorem]{Proposition}
\newtheorem{remark}[theorem]{Remark}
\newtheorem{lemma}[theorem]{Lemma}
\newtheorem{corollary}[theorem]{Corollary}
\newtheorem{definition}[theorem]{Definition}
\newtheorem{example}[theorem]{Example}
\newtheorem{assumption}[theorem]{Assumption}
}\fi

\newtheorem{definition}{Definition}
\newtheorem{theorem}{Theorem}
\newtheorem{lemma}{Lemma}
\newtheorem{example}{Example}
\newtheorem{corollary}{Corollary}
\newtheorem{proposition}{Proposition}
\newtheorem{remark}{Remark}

\title{On the Aubin property of solution maps to  parameterized  variational systems with implicit constraints}
\author{Helmut Gfrerer\thanks{Institute of Computational Mathematics, Johannes Kepler University Linz,
              A-4040 Linz, Austria, Email: helmut.gfrerer@jku.at              }
\and  Ji\v{r}\'{i} V. Outrata\thanks{Institute of Information Theory and Automation, Academy of Sciences of the Czech Republic, 18208 Prague, Czech Republic, and Centre for
              Informatics and Applied Optimization, Federation University of Australia, POB 663, Ballarat,  Vic 3350, Australia,  Email: outrata@utia.cas.cz
}}
\date{}
\begin{document}
\maketitle



{\bf Abstract.} In the paper a new sufficient condition for the Aubin property to a class of parameterized variational systems is derived. In these systems the constraints depend both on the parameter as well as on the decision variable itself and they include, e.g., parameter-dependent quasi-variational inequalities and implicit complementarity problems. The result is based on a general condition ensuring the Aubin property of implicitly defined multifunctions which employs the recently introduced notion of the directional limiting coderivative. Our final condition can be verified, however, without an explicit computation of these coderivatives. The procedure is illustrated by an example.

{\bf Key words.} 
solution map, parameterized variational system,  Aubin property, directional limiting coderivative

{\bf AMS Subject classification.} 
 49J53, 90C31, 90C46
\section{Introduction}
The Aubin (Lipschitz-like) property is probably the most important extension of the Lipschitz continuity to multifunctions. It has been introduced by J.-P. Aubin in \cite{Au} (under a different name) and since that time it is widely used in variational analysis and its applications.
In \cite{GO3}, a new condition has been derived ensuring the Aubin property of implicitly defined multifunctions around a given reference point. To be precise, it concerns the multifunction $S: \mathbb{R}^{m} \rightrightarrows  \mathbb{R}^{n}$ defined by
\begin{equation}\label{eq-200}
S(p):= \{x \in  \mathbb{R}^{n} | 0 \in M(p,x)\},
\end{equation}
where $p$ is the {\em parameter}, $x$ is the {\em decision variable} and multifunction $M: \mathbb{R}^{m} \times \mathbb{R}^{n} \rightrightarrows  \mathbb{R}^{l}$ is given. In the form (\ref{eq-200}) we can write down a large class of parameterized optimization and equilibrium problems and so this condition can well be used, e.g., in {\em post-optimal analysis} (where $p$ corresponds to uncertain problem data) or in  problems with the so-called equilibrium constraints (where $p$ represents the control variable).

The application of this condition requires, however, the computation of the graphical derivative and the directional limiting coderivative of $M$, which may be quite demanding, e.g., in case of solution maps to variational systems. In \cite{GO4} the authors investigated from this point of view a class of variational systems, in which the (rather general) constraints did not depend on the parameter $p$. In this paper we intend to make a further step and consider a variational system, where the constraint set depends both on the parameter as well as on the decision variable itself. This generality permits to analyze the Aubin property of, among other things, rather complicated parameterized {\em quasivariational inequalities}  (QVIs). The used model comes from \cite{QVI} where, to its analysis, the authors employed some advanced tools of the generalized differential calculus of B. Mordukhovich \cite{JMAA, M1}. Among the results of \cite{QVI} one finds also a sufficient condition for the Aubin property of the associated solution map and our main aim here is  the sharpening of that condition on the basis of the results from \cite{GO3}. At the same time, despite of the increased complexity of the considered model, our final condition (Theorem \ref{Thm.6})  seems to be more workable than its counterpart in \cite[Theorem 5]{GO4}.

The paper is organized as follows. In the preliminary Section 2 we recall the used notions from variational analysis and formulate properly the considered problem. Sections 3 and 4 deal with the computation of the graphical derivative and the directional limiting coderivative of the investigated multifunction $M$, respectively. The results presented in Section 4 depend heavily on selected results from \cite{BGO}, where a rich calculus for directional limiting notions (normal cones, subdifferentials and coderivatives) has been  developed. The main results of the paper are then collected in Section 5. They include both the new criterion (sufficient condition) for the Aubin property of the solution map to the investigated variational system as well as a formula for the graphical derivative of this solution map which may be used, e.g., in some sensitivity issues. The usage of the suggested technique is illustrated by an academic example.

The following notation is employed. Given a set $A \subset \mathbb{R}^{n}, \ {\rm sp\,} A$ stands for the linear hull of $A$ and $A^{\circ}$ is the (negative) polar of $A$. For a convex cone $K,~ \lin~ K$ denotes the {\em lineality} space of $K$, i.e., the set $K - K$.  Further, $\mathbb{B},\mathbb{S}$  is the unit ball and the unit sphere, respectively. Given a vector $a \in \mathbb{R}^{n}$, $[a]$ is the linear space generated by $a$ and $[a]^{\perp}$ stands for the orthogonal complement to $[a]$. Finally, $\stackrel{A}{\rightarrow}$ means the convergence within a set $A$ and $ \Limsup$ stands for the Painlev\'{e}-Kuratowski set limit.

\section{Problem formulation and preliminaries}

In the first part of this section we introduce some notions from variational analysis which will be
extensively used throughout the whole paper. Consider first a general closed-graph multifunction
$F:\mathbb{R}^{n} \rightrightarrows \mathbb{R}^{z}$ and its inverse $F^{-1}:\mathbb{R}^{z}
\rightrightarrows \mathbb{R}^{n}$ and assume that $(\bar{u},\bar{v})\in \Gr F$.
\begin{definition} 
 We say that $F$ has the {\em Aubin property} around
$(\bar{u},\bar{v})$, provided there are neighborhoods $U$ of $\bar{u}$, $V$ of $\bar{v}$ and a constant
$\kappa > 0$ such that
\[
F(u_{1}) \cap V \subset F(u_{2})+\kappa \| u_{1}-u_{2} \| \mathbb{B} \mbox{ for all } u_{1}, u_{2} \in
U.
\]
$F$ is said to be {\em calm} at $(\bar{u},\bar{v})$, provided there is a neighborhood $V$ of
$\bar{v}$ and a constant $\kappa > 0$ such that
\[
F(u) \cap V \subset F(\bar{u})+\kappa \| u-\bar{u} \| \mathbb{B} \mbox{ for all } u \in
\mathbb{R}^{n}.
\]
\end{definition}

It is clear that the calmness is substantially weaker (less restrictive) than the Aubin property.
Furthermore, it is known that $F$ is calm at $(\bar{u},\bar{v})$ if and only if $F^{-1}$ is {\em
metrically subregular at} $(\bar{v},\bar{u})$, i.e., there is a neighborhood $V$ of $\bar{v}$ and a
constant $\kappa > 0$ such that
\begin{equation}\label{eq-300}
\dist{v,F(\bar{u})}\leq \kappa \dist{\bar{u},F^{-1}(v)} \mbox{ for all } v \in V,
\end{equation}
cf. \cite[Exercise 3H.4]{DoRo}.

To obtain directional versions of the above properties, consider a direction $d \in \mathbb{R}^{z}$, positive numbers $\varrho, \delta$ and define the set
\[
\mathcal{V}_{\varrho, \delta}(d):=\left\{v \in \varrho \mathbb{B} \left|~ \big\| \norm{d} v  - \norm{v} d \big\| \leq \delta\norm{v}\norm{d}\right.\right\}.
\]
We say that a set $\mathcal{V}$ is a {\em directional neighborhood} of  $d$ if there exist $\varrho, \delta > 0$ such that  $\mathcal{V}_{\varrho, \delta}(d)\subset \mathcal{V}$. Now, when the neighborhood $V$ in (\ref{eq-300}) is replaced by the set $\bar{v}+ \mathcal{V}$, we say that $F^{-1}$ is metrically subregular at $(\bar{u},\bar{v})$ {\em in direction} $d$.

 To conduct a thorough analysis of the above stability notions one typically makes use of some basic
 notions of  generalized differentiation, whose definitions are presented below.
 \begin{definition}\label{DefVarGeom}
 Let $A$  be a closed set in $\mathbb{R}^{n}$ and $\bar{x} \in A$. Then
\begin{enumerate}
 \item [(i)]
 $T_{A}(\bar{x}):=\Limsup\limits_{t\searrow 0} \frac{A-\bar{x}}{t}$
 is the {\em tangent (contingent, Bouligand) cone} to $A$ at $\bar{x}$ and
 $ \hat{N}_{A}(\bar{x}):=(T_{A}(\bar{x}))^{\circ} $
 is the {\em regular (Fr\'{e}chet) normal cone} to $A$ at $\bar{x}$.
 \item [(ii)]
 $ N_{A}(\bar{x}):=\Limsup\limits_{\stackrel{A}{x \rightarrow \bar{x}}} \hat{N}_{A}(x)$
 is the {\em limiting (Mordukhovich) normal cone} to $A$ at $\bar{x}$ and, given a direction $d \in\mathbb{R}^{n}$,
$ N_{A}(\bar{x};d):= \Limsup\limits_{\stackrel{t\searrow 0}{d^{\prime}\rightarrow
 d}}\hat{N}_{A}(\bar{x}+ td^{\prime})$
 is the {\em directional limiting normal cone} to $A$ at $\bar{x}$ {\em in direction} $d$.
 \end{enumerate}
\end{definition}

If $A$ is convex, then $\hat{N}_{A}(\bar{x})= N_{A}(\bar{x})$ amounts to the classical normal cone in the sense of convex analysis and we will  write $N_{A}(\bar{x})$.
By the definition, the limiting normal cone coincides with the directional limiting normal cone in direction
$0$, i.e.,  $N_A(\bar{x})=N_A(\bar{x};0)$, and $N_A(\bar{x};d)=\emptyset$ whenever $d\not\in T_A(\bar{x})$.

In the sequel, we will also make an extensive use of the so-called critical cone.
In the setting of Definition \ref{DefVarGeom} with an  additionally given vector $d^{*}\in \mathbb{R}^{n}$, the cone
\[
\mathcal{K}_{A}(\bar{x},d^{*}):= T_{A}(\bar{x}) \cap [d^{*}]^{\perp}
\]
is called the {\em critical cone} to $A$ at $\bar{x}$ with respect to $d^{*}$.

The above listed cones enable us to describe the local behavior of set-valued maps via various
generalized derivatives. Consider again the multifunction $F$ and the point $(\bar{u},\bar{v})\in \Gr
F$.

\begin{definition}\label{DefGenDeriv}
\begin{enumerate}
\item [(i)]
The multifunction $D F(\bar{u},\bar{v}):\mathbb{R}^{n} \rightrightarrows\mathbb{R}^{z}$, defined by
\[
DF(\bar{u},\bar{v})(d):= \{h \in \mathbb{R}^{z}| (d,h)\in T_{\Gr F}(\bar{u},\bar{v})\}, d \in
\mathbb{R}^{n}
\]
is called the {\em graphical derivative} of $F$ at $(\bar{u},\bar{v})$;
\item[(ii)]
 The multifunction $\hat D^\ast F(\bar{u},\bar{v} ):
 \mathbb{R}^{z}\rightrightarrows\mathbb{R}^{n}$, defined by
\[
\hat D^\ast F(\bar{u},\bar{v} )(v^\ast):=\{u^\ast\in \mathbb{R}^{n} | (u^\ast,- v^\ast)\in \hat
N_{\Gr F}(\bar{u},\bar{v} )\}, v^\ast\in \mathbb{R}^{z}
\]
is called the {\em regular (Fr\'echet) coderivative} of $F$ at $(\bar{u},\bar{v} )$.
\item [(iii)]
 The multifunction $D^\ast F(\bar{u},\bar{v} ): \mathbb{R}^{z}\rightrightarrows\mathbb{R}^{n}$,
 defined by
\[
D^\ast F(\bar{u},\bar{v} )(v^\ast):=\{u^\ast\in \mathbb{R}^{n} | (u^\ast,- v^\ast)\in N_{\Gr
F}(\bar{u},\bar{v} )\}, v^\ast\in \mathbb{R}^{z}
\]
is called the {\em limiting (Mordukhovich) coderivative} of $F$ at $(\bar{u},\bar{v} )$.
\item [(iv)]
 Given a pair of directions $(d,h) \in \mathbb{R}^{n} \times \mathbb{R}^{z}$, the
 multifunction\\
 $D^\ast F((\bar{u},\bar{v} ); (d,h)):
 \mathbb{R}^{n}\rightrightarrows\mathbb{R}^{z}$, defined by
\begin{equation*}
D^\ast  F((\bar{u},\bar{v} ); (d,h))(v^\ast):=\{u^\ast \in \mathbb{R}^{n} | (u^\ast,-v^\ast)\in
N_{\Gr F}((\bar{u},\bar{v} ); (d,h)) \}, v^\ast\in \mathbb{R}^{z}
\end{equation*}
is called the {\em directional limiting coderivative} of $F$ at $(\bar{u},\bar{v} )$ in
direction $(d,h)$.
\end{enumerate}
\end{definition}

For the properties of the cones $T_A(\bar{x})$, $\hat N_A(\bar{x})$ and $N_A(\bar{x})$ from Definition
\ref{DefVarGeom} and generalized derivatives (i), (ii) and (iii) from Definition \ref{DefGenDeriv} we
refer the interested reader to the monographs \cite{RoWe98} and \cite{M1}. The directional limiting normal cone and coderivative were introduced by the first author in \cite{Gfr13a} and various properties of these objects
can be found also in \cite{GO3} and the references
therein. Note that $D^\ast  F(\bar{u},\bar{v} )=D^\ast  F((\bar{u},\bar{v} ); (0,0))$ and that
$\dom D^\ast  F((\bar{u},\bar{v} ); (d,h))=\emptyset$ whenever $h\not\in DF(\bar{u},\bar{v})(d)$.

The above notions enable us to come back to the {\em solution map} (\ref{eq-200}) and state the (already announced) sufficient condition for the Aubin property of $S$ around $(\bar{p},\bar{x})$ from \cite{GO3}.

\begin{theorem}\label{Thm.1}
 \cite[Theorem 4.4]{GO3}. Let $M$ have a closed graph and assume that
 \begin{enumerate}
  \item [(i)]
  \[
  \{k \in \mathbb{R}^{n} | 0 \in DM(\bar{p},\bar{x},0)(h,k)\} \neq \emptyset \mbox{ for all } h \in \mathbb{R}^{m};
  \]
   \item [(ii)]
 $M$ is metrically subregular at $(\bar{p},\bar{x},0)$;
 \item [(iii)]
 For every nonzero $(h,k)\in \mathbb{R}^{m} \times \mathbb{R}^{n}$ verifying $0 \in
 DM(\bar{p},\bar{x},0)(h,k)$ one has the implication
 \begin{equation*}
(q^{*},0)\in D^{*}M ((\bar{p},\bar{x},0); (h,k,0))(v^{*})\Rightarrow q^{*}=0.
 \end{equation*}
 \end{enumerate}
 Then $S$ has the Aubin property around $(\bar{p},\bar{x})$ and for any $h \in \mathbb{R}^{m}$
 \begin{equation*}
DS(\bar{p},\bar{x})(h)=\{k|0\in DM(\bar{p},\bar{x},0)(h,k)\}.
 \end{equation*}
 The above assertions remain true provided assumptions (ii), (iii) are replaced by
 \begin{enumerate}
\item [(iv)] For every nonzero $(h,k)\in \mathbb{R}^{m} \times \mathbb{R}^{n}$ verifying $0 \in DM
    (\bar{p},\bar{x},0)(h,k)$ one has the implication
\begin{equation*}
(q^{*},0)\in D^{*}M ((\bar{p},\bar{x},0); (h,k,0))(v^{*})\Rightarrow
\left \{
\begin{array}{l}
q^{*}=0\\ v^{*}=0.
\end{array}
\right.
 \end{equation*}
 \end{enumerate}
\end{theorem}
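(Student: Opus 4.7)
My plan is to derive the theorem by combining a directional version of the Mordukhovich criterion for the Aubin property with an exact calculus rule that transports tangent and directional normal-cone information from $\Gr M$ to $\Gr S$ under the metric subregularity hypothesis.

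First, I would exploit the identification $\Gr S = \{(p,x)\mid (p,x,0)\in \Gr M\}$, so that $\Gr S$ is the preimage of $\{0\}$ under the multifunction $(p,x)\mapsto M(p,x)$. Metric subregularity of $M$ at $(\bar{p},\bar{x},0)$ is precisely the constraint qualification that makes this preimage calculus exact. From it I would first deduce
\begin{equation*}
(h,k) \in T_{\Gr S}(\bar{p},\bar{x}) \iff (h,k,0)\in T_{\Gr M}(\bar{p},\bar{x},0) \iff 0 \in DM(\bar{p},\bar{x},0)(h,k),
\end{equation*}
which immediately yields the graphical derivative formula $DS(\bar{p},\bar{x})(h)=\{k\mid 0\in DM(\bar{p},\bar{x},0)(h,k)\}$. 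Condition (i) then guarantees $DS(\bar{p},\bar{x})(h)\neq\emptyset$ for every $h\in\mathbb{R}^{m}$.

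Second, under the same subregularity hypothesis the parallel directional normal-cone inclusion
\begin{equation*}
N_{\Gr S}\big((\bar{p},\bar{x});(h,k)\big) \subset \Big\{(q^*,-v^*) \,\Big|\, \exists\,y^*\colon (q^*,-v^*,y^*) \in N_{\Gr M}\big((\bar{p},\bar{x},0);(h,k,0)\big) \Big\}
\end{equation*}
holds, which in coderivative language says that $q^*\in D^*S((\bar{p},\bar{x});(h,k))(0)$ forces the existence of some $v^*$ with $(q^*,0) \in D^*M((\bar{p},\bar{x},0);(h,k,0))(v^*)$. Condition (iii) then drives $q^*=0$ along every nonzero critical direction $(h,k)\in \Gr DS(\bar{p},\bar{x})\setminus\{0\}$, which is exactly the hypothesis of the directional Mordukhovich criterion for the Aubin property; combined with (ii) (which settles the non-directional component), this delivers the Aubin property of $S$ around $(\bar{p},\bar{x})$.

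For the alternative with (iv) in place of (ii) and (iii), I would argue that (i) and (iv) together already enforce (ii) by contradiction: a directional failure of metric subregularity at $(\bar{p},\bar{x},0)$ produces, via the standard limiting construction, an element $(q^*,0) \in D^*M((\bar{p},\bar{x},0);(h,k,0))(v^*)$ with $v^*\neq 0$, in direct conflict with (iv). Once (ii) is recovered, (iv) trivially implies (iii) and the first part applies. The main obstacle is the directional normal-cone inclusion of the second step: the non-directional preimage calculus under metric subregularity is classical, but its directional refinement requires a careful selection of approximating sequences that track the prescribed direction $(h,k,0)$ while preserving the quantitative subregularity estimate uniformly along those sequences — this is where the bulk of the technical work lies.
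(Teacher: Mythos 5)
You should first be aware that the paper does not prove this statement at all: Theorem \ref{Thm.1} is imported verbatim from \cite[Theorem 4.4]{GO3}, so there is no in-paper argument to compare yours against, and I can only assess your sketch on its own terms. In outline it is the right strategy (and close in spirit to the original proof in \cite{GO3}): under (ii) the tangential transfer $(h,k)\in T_{\Gr S}(\pb,\xb)\Leftrightarrow(h,k,0)\in T_{\Gr M}(\pb,\xb,0)$ is correct and yields the derivative formula, (i) then gives $\dom DS(\pb,\xb)=\R^m$, a directional normal-cone transfer plus (iii) disposes of the coderivative condition on $\Gr S$, and (i)+(iv) are meant to recover (ii) and (iii).

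Two places are weaker than you present them. First, the constraint qualification in your second step does not match hypothesis (ii) as framed: writing $\Gr S=g^{-1}(\Gr M)$ with $g(p,x)=(p,x,0)$, the off-the-shelf (directional) preimage rule requires metric subregularity of the map $(p,x)\Rightarrow g(p,x)-\Gr M$, i.e.\ an estimate of $\dist{(p,x),\Gr S}$ by $\dist{(p,x,0),\Gr M}$, which is formally stronger than (ii) because $\dist{(p,x,0),\Gr M}\le\dist{0,M(p,x)}$. The correct route is the level-set rule for $M$ itself ($\Gr S=M^{-1}(0)$), whose CQ is exactly (ii); one then uses that the subregularity estimate holds with a uniform modulus on a whole neighborhood, so that at the points $(\pb,\xb)+t_\nu(h_\nu,k_\nu)\in\Gr S$ carrying the regular normals one obtains bounded multipliers $v^*_\nu$ and, after passing to the limit along the prescribed direction, the inclusion into $N_{\Gr M}((\pb,\xb,0);(h,k,0))$ — this is the technical core you correctly flag, but it is not the ``$g^{-1}(\Gr M)$'' calculus as you set it up. Second, two appeals are heavier than ``standard'': the claim that (i)+(iv) force (ii) is precisely Gfrerer's Ekeland-based sufficient condition for directional metric subregularity (failure in a unit direction $(h,k)$ produces $v^*\neq 0$ with $(0,0)\in D^{*}M((\pb,\xb,0);(h,k,0))(v^*)$, contradicting (iv)), combined with the fact that subregularity in every unit direction implies subregularity at the point; and the ``non-directional component'' of the criterion is not settled by (ii) but by the domain condition $\dom DS(\pb,\xb)=\R^m$, which annihilates regular normals of the form $(q^*,0)\in\hat N_{\Gr S}(\pb,\xb)$; after that, the classical Mordukhovich criterion together with the decomposition of $N_{\Gr S}(\pb,\xb)$ into $\hat N_{\Gr S}(\pb,\xb)$ and the directional limiting cones over unit directions closes the argument. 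With these repairs your outline does reconstruct a valid proof.
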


We are now ready to proceed to the proper problem formulation. As announced in the Introduction, this paper is devoted to solution maps of a class of variational systems in which
\begin{equation}\label{eq-50}
M(p,x):= f(p,x)+\hat{N}_{\Gamma(p,x)}(x),
\end{equation}
with $f: \mathbb{R}^{m} \times \mathbb{R}^{n} \rightarrow \mathbb{R}^{n}$ being continuously differentiable and $\Gamma : \mathbb{R}^{m} \times \mathbb{R}^{n} \rightrightarrows\mathbb{R}^{n}$ given via
\begin{equation}\label{eq-51}
\Gamma(p,x)=\{y \in \mathbb{R}^{n} | q(p,x,y)\in D\}.
\end{equation}
In (\ref{eq-51}), $q: \mathbb{R}^{m} \times \mathbb{R}^{n} \times \mathbb{R}^{n}\rightarrow \mathbb{R}^{s}$ is twice continuously differentiable and $D \subset \mathbb{R}^{s} $ is convex and polyhedral.

Consider the reference point $(\bar{p},\bar{x})$ from the graph of the solution map $S$ and, to unburden the notation, let us introduce the functions $\tilde{q}: \mathbb{R}^{m} \times \mathbb{R}^{n} \rightarrow \mathbb{R}^{s}$ and $b : \mathbb{R}^{m} \times \mathbb{R}^{n} \rightarrow \mathbb{R}^{s \times n}$ by
\[
\tilde{q}(p,x)= q(p,x,x)~ \mbox{ and }~ b(p,x)=\nabla_{3}q(p,x,x).
\]
Throughout the whole paper we will impose the following assumption:
\begin{enumerate}
 \item [(A)]
  The implication
  \begin{equation}\label{eq-52}
\left.\begin{aligned}
& b(\bar{p},\bar{x})^{T} \lambda =0\\
& \lambda \in {\rm sp\,} N_{D}(\tilde{q}(\bar{p},\bar{x}))
\end{aligned}\right\} \Rightarrow \lambda = 0
  \end{equation}
  is fulfilled.
  \end{enumerate}
(A)
  entails in particular that the {\em generalized equation} (GE)
  \begin{equation}\label{eq-400}
  0 \in f(p,x)+\hat{N}_{\Gamma(p,x)}(x)
  \end{equation}
is locally, around $(\bar{p},\bar{x})$, equivalent with the (possibly simpler) GE
\begin{equation}\label{eq-53}
0 \in f(p,x)+ b(p,x)^{T}N_{D}(\tilde{q}(p,x))
\end{equation}
which will be used as our basic model in the whole development. Indeed, as argued in \cite{QVI}, this follows from a slight modification of amenability results in \cite[Chapter 10.F]{RoWe98} when applied to the set $\Gamma(\bar{p},\bar{x})$ at $\bar{x}\in \Gamma(\bar{p},\bar{x})$. In fact, this equivalence holds true even under a relaxation of (A), where  the second line on  the left-hand side of (\ref{eq-52}) is replaced by $\lambda \in N_{D}(\tilde{q}(\bar{p},\bar{x}))$. Note that this relaxed condition is imposed in \cite{QVI} instead of (A).

Since $D$ is polyhedral, (A) is equivalent with the {\em nondegeneracy} of $\Gamma(\bar{p},\bar{x})$ at $\bar{x}$, i.e., with the condition
\begin{equation*}
b(\bar{p},\bar{x})\mathbb{R}^{n} + \lin T_{D} (\tilde{q}(\bar{p},\bar{x}))=\mathbb{R}^{s}.
\end{equation*}
This follows from \cite[formula (4.172) and Example 3.139]{BoSh}. The polyhedrality of $D$ implies further that we can employ the efficient representation of  $T_{\Gr N_{D}}$ and its polar provided in  \cite[Section 2]{DoRo96}.

Finally  note that, given a $y^* \in \hat{N}_{\Gamma(\bar{p},\bar{x})}(\bar{x})$, under
 (A)  the relations
\begin{equation}\label{eq-55}
y^* = b(\bar{p},\bar{x})^{T}\lambda, ~ \lambda \in N_{D}(\tilde{q}(\bar{p},\bar{x}))
\end{equation}
 have a unique solution $\lambda$. Thanks to this fact, most formulas in the sequel are substantially simplified.

To derive the announced new criterion for the Aubin property of solution maps given by  (\ref{eq-200}) and (\ref{eq-50}), we will in the first step apply Theorem \ref{Thm.1} to GE (\ref{eq-53}). The needed graphical derivative and directional limiting coderivative of the respective mapping $M$ are computed in the next two sections.

\section{Computation of the graphical derivative}

The right-hand side of (\ref{eq-53}) amounts to the sum of a smooth single-valued function $f$ and the multifunction $Q:\mathbb{R}^{m} \times \mathbb{R}^{n} \rightrightarrows \mathbb{R}^{n}$
defined via
\[
Q(p,x):=b(p,x)^{T}N_{D}(\tilde{q}(p,x)).
\]
The graphical derivative of $Q$ is related with the one of the mapping
$\Psi:\R^m\times\R^n\times\R^n\rightrightarrows\R^n$ given by
\begin{equation*}
  \Psi(p,x,y):=\hat N_{\Gamma(p,x)}(y).
\end{equation*}
Note that $ Q(\pb,\xb)=\Psi(\pb,\xb,\xb)$. In what follows we denote $\zb:=(\pb,\xb,\xb)$ and for any $z^*=(p^*,x^*,y^*)\in\R^m\times\R^n\times\R^n$ we denote by $\pi_3$ the canonical projection of $z^\ast$ on its third component, i.e.,
$\pi_3(z^*)=y^*$.

\begin{proposition}\label{PropGraphDer}
  Under assumption (A) for all $y^*\in  \Psi(\zb)$ and all
  $w=(h,k,l)$ we have
  \begin{align}\label{EqGraphDerPsi1}
    D\Psi(\zb,y^*)(w)&=\nabla(\nabla_3q(\cdot)^T\lambda)(\zb)w+\pi_3(N_{\K_{\gph\Gamma}(\zb,\nabla
    q(\zb)^T\lambda)}(w))\\
    \label{EqGraphDerPsi2}&=\nabla(\nabla_3q(\cdot)^T\lambda)(\zb)w+\nabla_3q(\zb)^T
    N_{\K_D(q(\zb),\lambda)}(\nabla q(\zb)w),
  \end{align}
  where $\lambda$ is the unique solution of the system
  \begin{equation}\label{EqLambda}\nabla_3q(\zb)^T\lambda=y^*,\ \lambda\in
  N_D(q(\zb)).\end{equation}
\end{proposition}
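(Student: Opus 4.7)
The strategy is to exploit the polyhedrality of $D$ together with the nondegeneracy condition (A) in order to reduce the graphical derivative computation to the classical Dontchev--Rockafellar tangent cone formula for $\gph N_D$.

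First, I would note that (A) persists by continuity in a neighborhood of $\bar z$, so the standard chain rule for Fr\'echet normal cones under nondegeneracy yields the local representation
$$\Psi(p,x,y)=\nabla_3 q(p,x,y)^T N_D(q(p,x,y)),$$
with uniquely determined multiplier. Consequently, $\gph \Psi$ is locally the image of the set $E:=\{(p,x,y,\mu): (q(p,x,y),\mu)\in \gph N_D\}$ under the smooth mapping $\Phi(p,x,y,\mu):=((p,x,y),\nabla_3 q(p,x,y)^T \mu)$, and the representation is locally one-to-one thanks to the uniqueness of $\lambda$ solving \eqref{EqLambda}.

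Second, I would apply the tangent cone chain rule to $E=G^{-1}(\gph N_D)$ with $G(p,x,y,\mu):=(q(p,x,y),\mu)$. The associated constraint qualification reduces to $\nabla q(\bar z)\R^{m+2n}+\K_D(q(\bar z),\lambda)=\R^s$, which is implied by (A) since $\lin T_D(q(\bar z))\subseteq \K_D(q(\bar z),\lambda)$. Pushing the tangent cone of $E$ forward through the differential of $\Phi$ yields
$$D\Psi(\bar z,y^*)(w)=\nabla(\nabla_3 q(\cdot)^T\lambda)(\bar z)w + \nabla_3 q(\bar z)^T\{\mu: (\nabla q(\bar z)w,\mu)\in T_{\gph N_D}(q(\bar z),\lambda)\}.$$
Substituting the Dontchev--Rockafellar formula $T_{\gph N_D}(q(\bar z),\lambda)=\{(v,\mu): v\in \K_D(q(\bar z),\lambda),\ \mu\in N_{\K_D(q(\bar z),\lambda)}(v)\}$, valid for polyhedral $D$ by \cite[Section 2]{DoRo96}, then delivers formula \eqref{EqGraphDerPsi2}.

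To pass from \eqref{EqGraphDerPsi2} to \eqref{EqGraphDerPsi1}, I would first establish the identity $\K_{\gph\Gamma}(\bar z,\nabla q(\bar z)^T\lambda)=\{w: \nabla q(\bar z)w\in \K_D(q(\bar z),\lambda)\}$, which follows by combining the nondegeneracy description of $T_{\gph\Gamma}(\bar z)$ with the computation $\langle\nabla q(\bar z)^T\lambda,w\rangle=\langle\lambda,\nabla q(\bar z)w\rangle$. A further chain rule for normal cones to the linear preimage of a polyhedral cone (again justified by (A)) then gives
$$N_{\K_{\gph\Gamma}(\bar z,\nabla q(\bar z)^T\lambda)}(w)=\nabla q(\bar z)^T N_{\K_D(q(\bar z),\lambda)}(\nabla q(\bar z)w),$$
whose third-component projection $\pi_3$ equals $\nabla_3 q(\bar z)^T N_{\K_D(q(\bar z),\lambda)}(\nabla q(\bar z)w)$, matching the variable part of \eqref{EqGraphDerPsi2}.

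The principal technical obstacle lies in the first chain rule step: one must verify carefully that (A) supplies the correct Robinson-type constraint qualification at $(\bar z,\lambda)$, and that uniqueness of $\lambda$ is what legitimately identifies $\gph \Psi$ locally with the forward image $\Phi(E)$ without introducing spurious contributions from alternative multipliers. Once this is in place, the rest is routine polyhedral convex analysis.
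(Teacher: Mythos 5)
Your parametrization of $\gph\Psi$ as the local image of $E:=\{(z,\mu)\mv (q(z),\mu)\in\gph N_D\}$ under $\Phi(z,\mu)=(z,\nabla_3q(z)^T\mu)$ is a reasonable alternative to the paper's route (the paper verifies Robinson stability of the system $q(p,x,y)\in D$ via (A) and then quotes a ready-made graphical-derivative theorem to obtain \eqref{EqGraphDerPsi1}, deriving \eqref{EqGraphDerPsi2} afterwards by a polarity computation), but your central step has a genuine gap. ``Pushing the tangent cone of $E$ forward through the differential of $\Phi$'' yields only the inclusion $\nabla\Phi(\zb,\lambda)T_E(\zb,\lambda)\subset T_{\Phi(E)}(\zb,y^*)$, i.e.\ the lower estimate ``$\supset$'' in \eqref{EqGraphDerPsi2}; the asserted equality also requires the converse inclusion, and that is precisely the nontrivial half. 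It does not follow from $\Phi|_E$ being locally one-to-one: a smooth bijection can collapse tangent directions (already $t\mapsto t^3$ on $\R$ does), so uniqueness of the multiplier identifies the \emph{sets} $\gph\Psi$ and $\Phi(E)$ locally but says nothing about their tangent cones. What is needed is that for sequences $(\zb+t_\nu w_\nu,\,y^*+t_\nu v_\nu)\in\gph\Psi$ with $(w_\nu,v_\nu)\to(w,v)$ the unique multipliers $\mu_\nu$ have convergent difference quotients $(\mu_\nu-\lambda)/t_\nu\to\eta$ with $(\nabla q(\zb)w,\eta)\in T_{\gph N_D}(q(\zb),\lambda)$. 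This is where polyhedrality of $D$ enters essentially: for $\nu$ large one has $N_D\big(q(\zb+t_\nu w_\nu)\big)\subset N_D\big(q(\zb)\big)$, hence $(\mu_\nu-\lambda)/t_\nu\in N_D\big(q(\zb)\big)+[\lambda]\subset{\rm sp\,}N_D\big(q(\zb)\big)$, and (A) makes $\nabla_3q(\zb)^T$ injective on this span, which forces boundedness and convergence of the quotients. This argument cannot be dismissed as ``routine polyhedral convex analysis''; it is exactly the argument the paper writes out (for the mapping $Q$) in the proof of Theorem~\ref{ThGraphDer}, and your proposal never supplies it.

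Two further remarks. The constraint qualification you state for the preimage step, $\nabla q(\zb)\R^{m+2n}+\K_D(q(\zb),\lambda)=\R^s$, is not the relevant condition: $E$ is the preimage of the \emph{nonconvex} set $\gph N_D\subset\R^s\times\R^s$, so what is needed is metric subregularity (or regularity) of $(z,\mu)\mapsto(q(z),\mu)-\gph N_D$ at $((\zb,\lambda),0)$, whose verification involves $N_{\gph N_D}(q(\zb),\lambda)$ rather than the critical cone in $\R^s$. Such a condition does hold under (A) --- every $(u^*,0)\in N_{\gph N_D}(q(\zb),\lambda)$ has $u^*\in{\rm sp\,}N_D(q(\zb))$ by the face description of $N_{\gph N_D}$ together with the argument of Lemma~\ref{LemAuxFaces}, so $\nabla_3 q(\zb)^Tu^*=0$ forces $u^*=0$ --- but this must be argued and is not the convex-polyhedral CQ you wrote down. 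Your final passage from \eqref{EqGraphDerPsi2} to \eqref{EqGraphDerPsi1} is fine: it is the paper's polarity identity $\big(N_D(q(\zb))+[\lambda]\big)\cap[\nabla q(\zb)w]^\perp=N_{\K_D(q(\zb),\lambda)}(\nabla q(\zb)w)$ run in reverse, and the needed descriptions of $T_{\gph\Gamma}(\zb)$ and $\K_{\gph\Gamma}(\zb,\nabla q(\zb)^T\lambda)$ are available because (A) already yields metric subregularity of $z\mapsto q(z)-D$.
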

\begin{proof}
  Assumption (A) implies  the weaker condition $\nabla_3q(\pb,\xb,\xb)^T\mu=0, \mu\in
  N_D(q(\pb,\xb,\xb))\Rightarrow \mu=0$ which in turn is equivalent with the metric regularity of
  the mapping $y\rightrightarrows q(\pb,\xb,y)-D$ around $(\xb,0)$, see \cite[Example
  9.44]{RoWe98}. Hence, by \cite[Corollary 3.7]{GfrMo17a} we deduce that the system $q(p,x,y)\in D$
  enjoys the so-called Robinson stability property at $(\pb,\xb,\xb)$, i.e., there is a constant
  $\kappa>0$ together with neighborhoods $V$ of $\xb$ and $W$ of $(\pb,\xb)$ such that
  \[\dist{y,\Gamma(p,x)}\leq \kappa \dist{q(p,x,y),D}\ \forall y\in V,\;(p,x)\in W.\]
  Because $D$ is  convex and polyhedral, we can apply \cite[Theorem 5.3]{GfrMo17} to
  compute the graphical derivative $D\Psi(\zb,y^*)(w)$ resulting in \eqref{EqGraphDerPsi1}.
  Since $N_{\K_{\gph\Gamma}(z,\nabla q(z)^T\lambda)}(w)=\Big(\nabla
  q(z)^T\big(N_D(q(z))+[\lambda]\big)\Big)\cap [w]^\perp$, we have
\[\pi_3(N_{\K_{\gph\Gamma}(\zb,\nabla q(\zb)^T\lambda)}(w))=\{\nabla_3 q(\zb)^T\eta\mv \eta\in
N_D(q(\zb))+[\lambda], \eta^T\nabla q(\zb)w=0\}.\]
Next, by using the identity
\begin{align*}\big(N_D(q(\zb))+[\lambda]\big)\cap [\nabla
q(\zb)w]^\perp&=\big(T_D(q(\zb))\cap[\lambda]^\perp\big)^\circ\cap [\nabla q(\zb)w]^\perp\\
&=\K_D(q(\zb),\lambda)^\circ \cap [\nabla q(\zb)w]^\perp=N_{\K_D(q(\zb),\lambda)}(\nabla
q(\zb)w),\end{align*} we obtain \eqref{EqGraphDerPsi2} and the proof is complete.
\end{proof}

\begin{remark}\label{Rem.1}
Since $N_{\K_D(q(\zb),\lambda)}(\nabla q(\zb)w)\subset N_D(q(\zb))+[\lambda]\subset {\rm sp\,} N_D\big(q(\zb)\big)$, for every  $y^*\in\Psi(\zb)$, every direction
$w=(h,k,l)\in\R^m\times\R^n\times\R^n$ and every $v\in D\Psi(\zb,y^*)(w)$ there is a unique element
$\eta$ satisfying
\begin{equation}
  \label{EqEta}\nabla_3q(\zb)^T\eta=v-\nabla(\nabla_3q(\cdot)^T\lambda)(\zb)w,\ \eta \in
  N_{\K_D(q(\zb),\lambda)}(\nabla q(\zb)w).
\end{equation}
\end{remark}
We are now ready to compute the graphical derivative of $Q$.
\begin{theorem}
  \label{ThGraphDer}
  For all $y^*\in Q(\pb,\xb)$  and all $(h,k)\in\R^m\times\R^n$ we have
  \begin{equation}\label{EqInclGraphDer}DQ((\pb,\xb),y^*)(h,k)\subset
  D\Psi(\zb,y^*)(h,k,k).\end{equation}
  Conversely, for all $y^*\in \Psi(\zb)$, all $(h,k)\in\R^m\times\R^n$ and all $v\in
  D\Psi(\zb,y^*)(h,k,k)$ such that the mapping
  \begin{align}\label{EqDirNonDegen}F(p,x,\mu):=\big(\tilde q(p,x), \mu\big)-\gph N_D
  \end{align}
  is  metrically subregular at $\big((\pb,\xb,\lambda),0\big)$ in direction $(h,k,\eta)$ with
  $\lambda$ and $\eta$ given by \eqref{EqLambda} and \eqref{EqEta} with $w:=(h,k,k)$,
  respectively,
  we have $v\in DQ\big((\pb,\xb),y^*\big)(h,k).$
\end{theorem}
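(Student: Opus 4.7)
The inclusion \eqref{EqInclGraphDer} is essentially a tautology once we exploit the substitution $y=x$, whereas the converse requires using the directional metric subregularity of $F$ as a correction device that converts nearly-admissible triples $(p,x,\mu)$ with $\mu\in N_D(q(p,x,y))$ (for $y$ close to $x$) into exactly admissible ones with $\mu'\in N_D(\tilde q(p',x'))$. My plan is to first dispatch the easy inclusion, then extract from the data realizing $v\in D\Psi(\bar z,y^*)(h,k,k)$ a directional multiplier sequence $\mu_n\to\lambda$ with $(\mu_n-\lambda)/t_n\to\eta$, and finally apply directional metric subregularity to build the sequence witnessing $v\in DQ((\bar p,\bar x),y^*)(h,k)$.

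For \eqref{EqInclGraphDer}, I would simply note that if $v\in DQ((\bar p,\bar x),y^*)(h,k)$, then there exist $t_n\searrow 0$ and $(h_n,k_n,v_n)\to(h,k,v)$ such that $y^*+t_nv_n\in Q(\bar p+t_nh_n,\bar x+t_nk_n)=\Psi(\bar p+t_nh_n,\bar x+t_nk_n,\bar x+t_nk_n)$; setting $l_n:=k_n$ yields $v\in D\Psi(\bar z,y^*)(h,k,k)$.

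For the converse, fix $v\in D\Psi(\bar z,y^*)(h,k,k)$. By definition there are $t_n\searrow 0$, $(h_n,k_n,l_n)\to(h,k,k)$ and $v_n\to v$ with $y^*+t_nv_n\in\hat N_{\Gamma(p_n,x_n)}(y_n)$, where $p_n:=\bar p+t_nh_n$, $x_n:=\bar x+t_nk_n$, $y_n:=\bar x+t_nl_n$. Under assumption (A) this normal vector is uniquely represented as $y^*+t_nv_n=\nabla_3q(p_n,x_n,y_n)^T\mu_n$ with $\mu_n\in N_D(q(p_n,x_n,y_n))$; a Taylor expansion of $\nabla_3q(\cdot)^T\mu$ around $\bar z$ combined with the injectivity provided by (A) (applied to $\mu_n-\lambda\in\mathrm{sp}\,N_D(q(\bar z))$) forces $\mu_n\to\lambda$, shows that $(\mu_n-\lambda)/t_n$ is bounded, and identifies any cluster point as the unique $\eta$ solving \eqref{EqEta} with $w=(h,k,k)$; hence $(\mu_n-\lambda)/t_n\to\eta$.

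The key step is now to invoke the directional metric subregularity of $F$ at $((\bar p,\bar x,\lambda),0)$ in direction $(h,k,\eta)$. Since $(p_n-\bar p,x_n-\bar x,\mu_n-\lambda)/t_n\to(h,k,\eta)$, the triples $(p_n,x_n,\mu_n)$ lie in an arbitrarily small directional neighborhood of $(h,k,\eta)$ for large $n$. Because $\mu_n\in N_D(q(p_n,x_n,y_n))$ and $q(p_n,x_n,x_n)-q(p_n,x_n,y_n)=\nabla_3q(\cdot)(x_n-y_n)=o(t_n)$ (as $k_n-l_n\to 0$), we obtain $\mathrm{dist}(0,F(p_n,x_n,\mu_n))=\mathrm{dist}((\tilde q(p_n,x_n),\mu_n),\gph N_D)=o(t_n)$. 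Directional metric subregularity then supplies $(\tilde p_n,\tilde x_n,\tilde\mu_n)\in F^{-1}(0)$ with $\|(\tilde p_n,\tilde x_n,\tilde\mu_n)-(p_n,x_n,\mu_n)\|=o(t_n)$, i.e.\ $\tilde\mu_n\in N_D(\tilde q(\tilde p_n,\tilde x_n))$. Setting $\tilde y_n^*:=b(\tilde p_n,\tilde x_n)^T\tilde\mu_n\in Q(\tilde p_n,\tilde x_n)$ and comparing with $y^*+t_nv_n=\nabla_3q(p_n,x_n,y_n)^T\mu_n$ via the smoothness of $q$, the $o(t_n)$ perturbation yields $(\tilde y_n^*-y^*)/t_n\to v$ and $((\tilde p_n,\tilde x_n)-(\bar p,\bar x))/t_n\to(h,k)$, so $v\in DQ((\bar p,\bar x),y^*)(h,k)$. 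The main obstacle I anticipate is rigorously justifying $(\mu_n-\lambda)/t_n\to\eta$ together with verifying that the approximating triples genuinely lie in a directional neighborhood of $(h,k,\eta)$; both are delicate bookkeeping arguments that rely crucially on (A) to pin down $\lambda$ and $\eta$ uniquely.
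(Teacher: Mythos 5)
Your proposal is correct and follows essentially the same route as the paper's proof: the trivial inclusion from the definition, the unique multipliers $\mu_n$ supplied by (A), the Taylor expansion identifying $(\mu_n-\lambda)/t_n\to\eta$ via injectivity of $\nabla_3 q(\bar z)^T$ on $\mathrm{sp}\,N_D(q(\bar z))$, the $o(t_n)$ estimate for $\mathrm{dist}\big((\tilde q(p_n,x_n),\mu_n),\mathrm{gph}\,N_D\big)$, and the directional metric subregularity correction yielding exact points of $\mathrm{gph}\,Q$. The only step you leave implicit is why $\mu_n-\lambda\in\mathrm{sp}\,N_D(q(\bar z))$; as in the paper, this rests on the polyhedrality of $D$, which gives $N_D\big(q(\bar z+t_n w_n)\big)\subset N_D\big(q(\bar z)\big)$ for all large $n$.
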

\begin{proof}The inclusion \eqref{EqInclGraphDer} follows immediately from the definition of the
graphical derivative and there remains to show the second statement. Consider $v\in
D\Psi(\zb,y^*)(h,k,k)$ such that $F$ is  metrically subregular at
$\big((\pb,\xb,\lambda),0\big)$ in direction $(h,k,\eta)$. Then there are sequences
$t_\nu\downarrow 0$, $w_\nu:=(h_\nu,k_\nu,l_\nu)\to w$ and $v_\nu\to v$ such that
\[y^*+t_\nu v_\nu \in \nabla_3q(\zb+t_\nu w_\nu)^TN_D\big(q(\zb+t_\nu w_\nu)\big) \mbox{ for all } \nu.\]
Due to (A) there is for all $\nu$ sufficiently large a unique multiplier $\lambda_\nu\in
N_D\big(q(\zb+t_\nu w_\nu)\big)$ satisfying $y^*+t_\nu v_\nu\in \nabla_3q(\zb+t_\nu
w_\nu)^T\lambda_\nu$. The sequence $\lambda_\nu$ is uniformly bounded yielding, together with
$y^*=\nabla_3 q(\zb)^T\lambda$, that
\begin{align*}t_\nu v_\nu&= \nabla_3q(z+t_\nu w_\nu)^T\lambda_\nu-y^*=\Big(\nabla_3
q(\zb)+t_\nu\nabla\big(\nabla_3 q(\zb)\big)w_\nu\Big)^T\lambda_\nu-\nabla_3
q(\zb)^T\lambda+\oo(t_\nu)\\
&=\nabla_3 q(\zb)^T(\lambda_\nu-\lambda)+t_\nu\nabla\big(\nabla_3
q(\cdot)^T\lambda_\nu\big)(\zb)w_\nu+\oo(t_\nu)\end{align*} and, consequently,
\[\lim_{\nu\to\infty}\nabla_3q(\zb)^T\frac{\lambda_\nu-\lambda}{t_\nu}=\lim_{\nu\to\infty}\Big(v_\nu-\nabla\big(\nabla_3
q(\cdot)^T\lambda_\nu\big)(\zb)w_\nu +\frac{\oo(t_\nu)}{t_\nu}\Big)=v-\nabla\big(\nabla_3
q(\cdot)^T\lambda\big)(\zb)w.\]
Since $D$ is a convex polyhedral set, we have $\lambda_\nu\in N_D\big(q(\zb+t_\nu
w_\nu)\big)\subset N_D\big(q(\zb)\big)$ for all $\nu$ sufficiently large and therefore
$\frac{\lambda_\nu-\lambda}{t_\nu}\in N_D\big(q(\zb)\big)+[\lambda]\subset {\rm
sp\,}N_D\big(q(\zb)\big)$. By virtue of (A) we conclude that $\frac{\lambda_\nu-\lambda}{t_\nu}$
is convergent to $\eta$. Since
\begin{align*}\lefteqn{\distb{(\tilde q(\pb+t_\nu h_\nu, \xb+t_\nu k_\nu),\lambda_\nu),\gph
N_D}}\\
&\leq \norm{\tilde q(\pb+t_\nu h_\nu, \xb+t_\nu k_\nu)-q(\zb+t_\nu w_\nu)}+ \distb{(q(\zb+t_\nu
w_\nu),\lambda_\nu),\gph N_D}\\ &=\norm{ q(\pb+t_\nu h_\nu, \xb+t_\nu k_\nu,x+t_\nu
k_\nu)-q(\pb+t_\nu h_\nu, \xb+t_\nu k_\nu,\xb+t_\nu
l_\nu)}=\OO(t_\nu\norm{k_\nu-l_\nu})=\oo(t_\nu),\end{align*} by the assumed directional metric
subregularity we can find for every $\nu$ sufficiently large some $p_\nu,x_\nu, \tilde \lambda_\nu$
with $0\in F(p_\nu,x_\nu,\tilde\lambda_\nu)$ and $\norm{p_\nu-(\pb+t_\nu
h_\nu)}+\norm{x_\nu-(\xb+t_\nu k_\nu)}+\norm{\tilde\lambda_\nu-\lambda_\nu}=\oo(t_\nu)$. Thus
\begin{align*}y^*+t_\nu v_\nu&=\nabla_3 q(\pb+t_\nu h_\nu,\xb_\nu+t_\nu k_\nu, \xb+t_\nu
l_\nu)^T\lambda_\nu=\nabla_3 q(p_\nu,x_\nu,x_\nu)^T\tilde\lambda_\nu+\oo(t_\nu)\\
&=b(p_\nu,x_\nu)^T\tilde\lambda_\nu+\oo(t_\nu).
\end{align*}
This equality, together with $\tilde\lambda_\nu\in N_D\big(\tilde q(p_\nu,x_\nu)\big)$, implies the inclusion $v\in
DQ\big((\pb,\xb),y^*\big)(h,k)$ and we are done.
\end{proof}
To ensure the directional metric subregularity of (\ref{EqDirNonDegen}) we may use the sufficient condition presented in Proposition \ref{LemDirSubreg} below. Recall that $\mathcal{F}$ is a {\em face} of a polyhedral convex cone $K$ provided for some vector $z^{*}\in K^{\circ}$ one has
\[
\mathcal{F}= K \cap \left[z^{*}\right]^{\perp}.
\]

\begin{proposition}\label{LemDirSubreg}
  Let $\lambda\in N_D\big(\tilde q(\pb,\zb)\big)$, let $(h,k)\in\R^m\times\R^n$ be a pair of directions
  satisfying $\nabla \tilde q(\pb,\xb)(h,k)\in \K_D\big(\tilde q(\pb,\xb),\lambda\big)$ and let
  $\eta\in N_{\K_D(\tilde q(\pb,\xb),\lambda)}\big(\nabla \tilde q(\pb,\xb)(h,k)\big)$. Further
  assume that for  every pair of faces $\F_1,\F_2$ of the critical cone $\K_D\big(\tilde
  q(\pb,\xb),\lambda\big)$ with $\nabla \tilde q(\pb,\xb)(h,k)\in \F_2\subset
  \F_1\subset[\eta]^\perp$ there holds
  \begin{equation*}    
    \nabla \tilde q(\pb,\xb)^T\mu=0, \mu\in (\F_1-\F_2)^\circ\
    \Rightarrow\ \mu=0.
  \end{equation*}
  Then the mapping $F$ given by \eqref{EqDirNonDegen} is  metrically subregular at
  $\big((\pb,\xb,\lambda),0\big)$ in direction $(h,k,\eta)$.
\end{proposition}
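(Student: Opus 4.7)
The natural approach is to apply the directional first-order sufficient condition for metric subregularity (directional FOSCMS in the sense of \cite{Gfr13a}). It reduces the claim to the implication
\[ 0 \in D^{\ast}F\bigl(((\pb,\xb,\lm),0);((h,k,\eta),0)\bigr)(v^{\ast}) \;\Longrightarrow\; v^{\ast} = 0. \]
To compute the directional coderivative on the left, I would write $\gph F = G^{-1}(\gph N_D)$ with the smooth submersion $G((p,x,\mu),(w_1,w_2)) := (\tilde q(p,x)-w_1, \mu-w_2)$ and apply the standard preimage chain rule for directional limiting normal cones under a submersion. A short calculation then yields that $0$ lies in the coderivative image of $v^{\ast}=(\gamma,\delta)$ if and only if $\delta = 0$, $\nabla\tilde q(\pb,\xb)^{T}\gamma = 0$, and
\[ (\gamma,0) \in N_{\gph N_D}\bigl((\tilde q(\pb,\xb),\lm);(\nabla\tilde q(\pb,\xb)(h,k),\eta)\bigr). \]

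The second key ingredient is the Dontchev--Rockafellar description of $N_{\gph N_D}$ in the polyhedral setting, already exploited in the paper through \cite[Section 2]{DoRo96}. It allows a face-based decomposition of the above directional normal cone: every pair $(\mu,-u)$ inside it arises from a face pair $\F_2 \subset \F_1$ of the critical cone $\K_D(\tilde q(\pb,\xb),\lm)$ with $\nabla\tilde q(\pb,\xb)(h,k)\in\F_2$ and $\eta \in \F_1^{\perp}$, via $u\in\F_1-\F_2$ and $\mu\in(\F_1-\F_2)^{\circ}$. Applied to the element $(\gamma,0)$ one retains only the face pairs meeting the geometric hypothesis $\nabla\tilde q(\pb,\xb)(h,k)\in\F_2\subset\F_1\subset[\eta]^{\perp}$ of the proposition, together with $\gamma\in(\F_1-\F_2)^{\circ}$.

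The assumption of the proposition then forces $\gamma=0$, hence $v^{\ast}=0$, and directional FOSCMS yields the claimed directional metric subregularity. The hard part I expect is to pin down the exact form of the polyhedral directional normal cone decomposition: one must verify that the face pair actually parameterizing a point of the directional normal cone satisfies both $\nabla\tilde q(\pb,\xb)(h,k)\in\F_2$ and $\F_1\subset[\eta]^\perp$ (not just weaker compatibility) and that the Dontchev--Rockafellar-type formula carries over from the limiting to the \emph{directional} limiting normal cone. An alternative, more self-contained route is a sequential contradiction argument: one takes a violating sequence $(p_\nu,x_\nu,\mu_\nu)$, uses the polyhedrality of $\gph N_D$ to extract, after passing to a subsequence, a limiting face pair $(\F_1,\F_2)$ compatible with $(h,k,\eta)$, and then converts the hypothesis into a surjectivity-type estimate producing the required near-solution---but this is noticeably more laborious than the coderivative route sketched above.
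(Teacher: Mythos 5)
Your proposal is correct and takes essentially the same route as the paper: a directional coderivative criterion of Gfrerer type reduces the claim to exactly the dual implication you state, and the face description of the directional limiting normal cone to $\gph N_D$ --- which is precisely \cite[Theorem 2.12]{GO3}, the result the paper invokes and which settles the ``hard part'' you flag (your sign convention in the second component is immaterial here since that component is $0$) --- combined with the proposition's hypothesis forces $\gamma=0$. The only cosmetic difference is that the paper establishes the stronger directional metric \emph{regularity} of the constraint system $(\tilde q(p,x),\mu)\in\gph N_D$ by invoking the characterization in \cite[Theorem 1]{GfrKl16}, where the chain-rule computation is already built into the criterion, whereas you target subregularity directly via directional FOSCMS and carry out the submersion chain rule by hand.
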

\begin{proof}We claim that $F$ is even metrically regular at $\big((\pb,\xb,\lambda),0\big)$ in
direction $\big((h,k,\eta),0\big)$. In order to show this claim we invoke the characterization of
directional  metric regularity from \cite[Theorem 1]{GfrKl16}, which reads in our case as
\[\nabla \tilde q(\pb,\xb)^T\mu=0,\ \xi=0,\ (\mu, \xi)\in N_{\gph N_D}\Big(\big(\tilde
q(\pb,\xb),\lambda\big);\big(\nabla \tilde q(\pb,\xb)(h,k),\eta\big)\Big)\ \Rightarrow\ \mu=0,\
\xi=0.\]
By \cite[Theorem 2.12]{GO3}, $N_{\gph N_D}\Big(\big(\tilde q(\pb,\xb),\lambda\big);\big(\nabla \tilde
q(\pb,\xb)(h,k),\eta\big)\Big)$ amounts to the union of all product sets $K^\circ\times K$ associated with
cones $K$ of the form $\F_1-\F_2$, where $\F_1,\F_2$ are faces of the critical cone
$\K_D\big(\tilde q(\pb,\xb),\lambda\big)$ with $\nabla \tilde q(\pb,\xb)(h,k)\in \F_2\subset
\F_1\subset[\eta]^\perp$. Thus our claim about the directional metric regularity of $F$ holds true
and the statement is proved.
\end{proof}

Of course, for the verification of the directional metric subregularity of (\ref{EqDirNonDegen}) one could employ also some non-directional less fine criteria mentioned, e.g., in \cite{QVI} and \cite{Hen}.

To write down the final formula for the  graphical derivative of $M$, we associate now with the considered variational system for fixed $\lambda\in \R^s$  the {\em Lagrangian} mapping $\Lag_\lambda:\R^m\times\R^n\to \R^n$ via
\[\Lag_\lambda(p,x):=f(p,x)+b(p,x)^T\lambda.\]
\if{$\mathcal{L}: \mathbb{R}^{m} \times \mathbb{R}^{n} \times\mathbb{R}^{s} \rightarrow\mathbb{R}^{n}$ as in \cite{QVI} via
\[
\mathcal{L} (p,x,\lambda) := f(p,x)+b(p,x)^{T}\lambda.
\]}\fi
Under the assumptions of Theorem \ref{ThGraphDer} we then obtain the formula
 \begin{equation*}
DM(\bar{p},\bar{x},0)(h,k)=
\Lag_\lb (\bar{p},\bar{x}) (h,k) +b(\bar{p},\bar{x})^{T}
N_{\mathcal{K}_{D}(\tilde{q}(\bar{p},\bar{x}),\lb)} (\nabla \tilde{q}(\bar{p},\bar{x})(h,k) ).
 \end{equation*}
 where $\lb$ is the unique solution of the system
 \begin{equation}\label{eq-61}
f(\bar{p},\bar{x})+b(\bar{p},\bar{x})^{T}\lambda=0,\ \lambda \in N_{D}(\tilde{q}(\bar{p},\bar{x})). \end{equation}

 \section{Computation of the directional limiting coderivative}
Given a pair of directions $(h,k)\in  \mathbb{R}^{m} \times\mathbb{R}^{n}$, the aim of this section is to provide possibly sharp estimates of the sets $D^{*}M ((\bar{p},\bar{x},0); (h,k,0))(z^{*})$. Due to \cite[formula (2.4)]{GO3} and the local equivalence of GEs (\ref{eq-400}) and (\ref{eq-53}) we have for any $v^*\in\R^n$ the equality
\begin{equation}\label{eq-800}
D^{*}M ((\bar{p},\bar{x},0); (h,k,0))  (v^{*}) = \nabla f(\bar{p},\bar{x})^{T}v^{*}+
 D^{*}Q (\bar{p},\bar{x}, -f(\bar{p},\bar{x})); (h,k, -\nabla f(\bar{p},\bar{x})(h,k))(v^{*}).
\end{equation}
It suffices thus to compute just the directional limiting coderivative of $Q$. To this purpose we
observe that
$Q(p,x)= S_{2}\circ S_{1}(p,x)$, where $S_{1}:\mathbb{R}^{m} \times \mathbb{R}^{n} \rightrightarrows \mathbb{R}^{m} \times\mathbb{R}^{n} \times \mathbb{R}^{s}$ is given by
\[
S_{1}(p,x):=
\left[ \begin{array}{c}
p\\
 x\\
 N_{D}(\tilde{q}(p,x))
\end{array}\right ]
\]
and $S_{2}: \mathbb{R}^{m} \times\mathbb{R}^{n} \times \mathbb{R}^{s} \rightarrow \mathbb{R}^{n}$ is given by
\[
S_{2}(u_{1},u_{2},u_{3}):= b(u_{1},u_{2})^{T} u_{3}.
\]
Consider the {\em intermediate} mapping $\Xi : \mathbb{R}^{m} \times \mathbb{R}^{n} \times \mathbb{R}^{n}\rightrightarrows \mathbb{R}^{m} \times\mathbb{R}^{n} \times\mathbb{R}^{s}$ defined by
\[
\begin{split}
& \Xi (p,x,y^*):=\{(u_{1},u_{2},u_{3}) \in S_{1}(p,x)| y^* = S_{2} (u_{1},u_{2},u_{3})\}=\\
& \{(u_{1},u_{2},u_{3})|u_{1}=p, u_{2}=x, u_{3}\in N_{D}(\tilde{q}(p,x)), b(p,x)^{T}u_{3}=y^*\}.
\end{split}
\]
\begin{lemma}\label{Lem.1}
Let $y^*\in Q(\pb,\xb)$. Then
$\Xi (\bar{p},\bar{x},y^*) = \{(\bar{p},\bar{x},\lambda)\}$ with $\lambda$ (uniquely) given by (\ref{eq-55}). Moreover, the values $\Xi (p,x,v^*)$ are bounded for all $(p,x,v^*) \in \dom \Xi$ close to $(\bar{p},\bar{x},y^*)$.
\end{lemma}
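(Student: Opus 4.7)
The first assertion is essentially unpacking the definition. For $(u_1,u_2,u_3)\in \Xi(\bar p,\bar x,y^*)$ the first two coordinates are forced to be $\bar p$ and $\bar x$, so I only have to solve $u_3 \in N_D(\tilde q(\bar p,\bar x))$, $b(\bar p,\bar x)^T u_3 = y^*$. Since $y^*\in Q(\bar p,\bar x)$, such a $u_3$ exists, and under assumption (A) the uniqueness statement (\ref{eq-55}) gives $u_3=\lambda$. So the singleton claim is immediate.

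The content is the boundedness assertion, which I would establish by contradiction. Suppose there is a sequence $(p_\nu,x_\nu,v^*_\nu)\in\dom\Xi$ converging to $(\bar p,\bar x,y^*)$ together with $u_3^\nu$ such that $u_3^\nu \in N_D(\tilde q(p_\nu,x_\nu))$, $b(p_\nu,x_\nu)^T u_3^\nu = v^*_\nu$, and $\|u_3^\nu\|\to\infty$. Normalize by setting $\mu_\nu:=u_3^\nu/\|u_3^\nu\|$ and pass to a subsequence so that $\mu_\nu\to\mu$ with $\|\mu\|=1$.

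The key step is to show $\mu\in\mathrm{sp}\,N_D(\tilde q(\bar p,\bar x))$ and $b(\bar p,\bar x)^T\mu = 0$, which together with (A) will force $\mu=0$, contradicting $\|\mu\|=1$. The linear equation is easy: dividing $b(p_\nu,x_\nu)^T u_3^\nu = v^*_\nu$ by $\|u_3^\nu\|$ and letting $\nu\to\infty$, the right-hand side goes to $0$ because $v^*_\nu$ is bounded, and continuity of $b$ yields $b(\bar p,\bar x)^T\mu=0$. For the cone containment, I use polyhedrality of $D$: since $\tilde q(p_\nu,x_\nu)\to \tilde q(\bar p,\bar x)$ in $D$, for $\nu$ large one has $N_D(\tilde q(p_\nu,x_\nu)) \subset N_D(\tilde q(\bar p,\bar x))$, so $\mu_\nu$ lies in this cone. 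Closedness of $N_D(\tilde q(\bar p,\bar x))$ then gives $\mu\in N_D(\tilde q(\bar p,\bar x))\subset \mathrm{sp}\,N_D(\tilde q(\bar p,\bar x))$.

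The main obstacle is the inclusion $N_D(\tilde q(p_\nu,x_\nu)) \subset N_D(\tilde q(\bar p,\bar x))$ for $\nu$ large, which is the place where polyhedrality of $D$ is genuinely used; on non-polyhedral convex sets it can fail. I would either invoke it as a standard property (nearby tangent cones to a polyhedral convex set enlarge, so normal cones shrink) or derive it directly by representing $D$ as the intersection of the active half-spaces at $\tilde q(\bar p,\bar x)$ and observing that points $\tilde q(p_\nu,x_\nu)$ sufficiently close to $\tilde q(\bar p,\bar x)$ can only activate a subset of these half-spaces. Everything else is a routine normalization-and-limit argument combined with assumption (A).
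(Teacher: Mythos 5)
Your proof is correct and follows essentially the paper's own route: the singleton claim is exactly the consequence of assumption (A) noted at the end of Section~2, and the boundedness is the same standard normalization-and-contradiction argument that the paper merely delegates to \cite[p.~18]{QVI} (and, as you in effect observe, even the relaxed condition with $\mu\in N_D(\tilde q(\bar p,\bar x))$ instead of $\mu\in{\rm sp\,}N_D(\tilde q(\bar p,\bar x))$ suffices). One minor simplification: for the limit step you do not actually need the polyhedral inclusion $N_D(\tilde q(p_\nu,x_\nu))\subset N_D(\tilde q(\bar p,\bar x))$, since the graph of the normal-cone map of any closed convex set is closed and already yields $\mu\in N_D(\tilde q(\bar p,\bar x))$, although your polyhedral argument is of course valid.
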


\proof
The first statement is directly implied by (A); see the mention at the end of Section 2.   The boundedness follows by a standard argumentation even from a relaxed condition
\begin{equation*}
\left. \begin{aligned}
& b(\bar{p},\bar{x})^{T}\lambda = 0 \\
& \lambda \in N_{D}(\tilde{q}(\bar{p},\bar{x}))
\end{aligned}\right \} \Rightarrow \lambda = 0,
\end{equation*}
see \cite[p.18]{QVI}.
\endproof

\begin{lemma}\label{Lem.2}
Let $\bar{d}=(\bar{p},\bar{x},\lambda) = \Xi (\bar{p},\bar{x},y^*)$. Then  the set
\begin{equation*}
\{\xi \in \mathbb{S}|\xi \in D S_{1}(\bar{p},\bar{x},\bar{d})(0), ~ 0 = \nabla S_{2}(\bar{d},y^*)(\xi)\}
\end{equation*}
 is empty.
\end{lemma}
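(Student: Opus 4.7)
The plan is to show that any candidate $\xi=(\xi_1,\xi_2,\xi_3)$ in the stated set must satisfy $\xi_1=0$, $\xi_2=0$ and $\xi_3\in {\rm sp\,}N_D(\tilde q(\bar p,\bar x))$, so that the condition $0=\nabla S_2(\bar d,y^\ast)(\xi)$ reduces to $b(\bar p,\bar x)^T\xi_3=0$; assumption (A) will then force $\xi_3=0$, contradicting $\xi\in\mathbb{S}$.

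First I would unpack $\xi\in DS_1(\bar p,\bar x,\bar d)(0)$ directly from Definition \ref{DefGenDeriv}(i). By the very form of $S_1$, the first two output components are the identities in $p$ and $x$. Writing out the defining sequences $t_\nu\downarrow 0$, $(h_\nu,k_\nu)\to 0$ and $(u_{1,\nu}^\ast,u_{2,\nu}^\ast,u_{3,\nu}^\ast)\to(\xi_1,\xi_2,\xi_3)$ with
\[(\bar p+t_\nu h_\nu,\bar x+t_\nu k_\nu,\bar p+t_\nu u_{1,\nu}^\ast,\bar x+t_\nu u_{2,\nu}^\ast,\lambda+t_\nu u_{3,\nu}^\ast)\in\gph S_1,\]
matching the first two ranges forces $u_{i,\nu}^\ast=h_\nu$ or $k_\nu$ for $i=1,2$, hence $\xi_1=\xi_2=0$. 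The remaining constraint is $\lambda_\nu:=\lambda+t_\nu u_{3,\nu}^\ast\in N_D(\tilde q(\bar p+t_\nu h_\nu,\bar x+t_\nu k_\nu))$.

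Next I would invoke polyhedrality of $D$: for any $q'\in D$ sufficiently close to $\tilde q(\bar p,\bar x)$, the normal cone $N_D(q')$ is a face of $N_D(\tilde q(\bar p,\bar x))$ and in particular a subset. Therefore $\lambda_\nu\in N_D(\tilde q(\bar p,\bar x))$ for all large $\nu$, so
\[u_{3,\nu}^\ast=\frac{\lambda_\nu-\lambda}{t_\nu}\in\frac{N_D(\tilde q(\bar p,\bar x))-\lambda}{t_\nu}\subset {\rm sp\,}N_D(\tilde q(\bar p,\bar x)).\]
Passing to the limit (and using that the span is closed), $\xi_3\in{\rm sp\,}N_D(\tilde q(\bar p,\bar x))$.

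Finally, since $S_2(u_1,u_2,u_3)=b(u_1,u_2)^Tu_3$, its derivative at $\bar d=(\bar p,\bar x,\lambda)$ applied to $\xi$ is
\[\nabla S_2(\bar d)(\xi)=\bigl(\nabla_1 b(\bar p,\bar x)\xi_1\bigr)^T\lambda+\bigl(\nabla_2 b(\bar p,\bar x)\xi_2\bigr)^T\lambda+b(\bar p,\bar x)^T\xi_3.\]
With $\xi_1=\xi_2=0$, the equality $\nabla S_2(\bar d,y^\ast)(\xi)=0$ reduces to $b(\bar p,\bar x)^T\xi_3=0$. Combined with $\xi_3\in{\rm sp\,}N_D(\tilde q(\bar p,\bar x))$, assumption (A) yields $\xi_3=0$ and hence $\xi=0$, contradicting $\|\xi\|=1$. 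The only delicate point is the face-subset property used in the second paragraph; this is a standard consequence of polyhedrality (cf.\ the reduction in \cite{DoRo96} referenced in Section 2), and I would simply quote it.
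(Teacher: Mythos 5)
Your proof is correct and follows the same skeleton as the paper's: force $\xi_1=\xi_2=0$ from the identity components of $S_1$, show $\xi_3\in{\rm sp\,}N_D(\tilde q(\bar p,\bar x))$, and then apply assumption (A) to the reduced equation $b(\bar p,\bar x)^T\xi_3=0$. The only genuine difference is how the middle step is justified: the paper passes through the inclusion $T_{\gph(N_D\circ\tilde q)}(\bar p,\bar x,\lambda)\subset\{(h,k,\eta)\mid(\nabla\tilde q(\bar p,\bar x)(h,k),\eta)\in T_{\gph N_D}(\tilde q(\bar p,\bar x),\lambda)\}$ and then quotes the Dontchev--Rockafellar description of $T_{\gph N_D}$ for polyhedral $D$ (cf.\ \cite[p.~1093]{DoRo96}) to conclude $\eta\in \mathcal{K}_D(\tilde q(\bar p,\bar x),\lambda)^\circ=N_D(\tilde q(\bar p,\bar x))+[\lambda]\subset{\rm sp\,}N_D(\tilde q(\bar p,\bar x))$, whereas you argue directly on the defining sequences, using the polyhedrality fact that $N_D(q')\subset N_D(\tilde q(\bar p,\bar x))$ for $q'\in D$ near $\tilde q(\bar p,\bar x)$, so that the difference quotients $(\lambda_\nu-\lambda)/t_\nu$ lie in the closed subspace ${\rm sp\,}N_D(\tilde q(\bar p,\bar x))$. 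Both routes are sound: your inclusion is standard (for $q'\in D$ close to $\bar q$ one even has $N_D(q')=N_D(\bar q)\cap[q'-\bar q]^\perp$, an exposed face, so the ``face'' remark you flag as the delicate point is indeed harmless, and only the subset property is needed), and it is exactly the device the paper itself employs in the proof of Theorem~\ref{ThGraphDer}; the price you pay is re-deriving the structure of $DS_1(\bar p,\bar x,\bar d)(0)$ by hand, which the paper gets slightly more quickly from the tangent-cone formula, but no ingredient is missing and the appeal to (A) at the end is used identically.
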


\proof
Clearly,
\begin{equation}\label{eq-205}
DS_{1}(\bar{p},\bar{x},\bar{d})(0)=\{(0,0,\eta) \in \mathbb{R}^{m} \times \mathbb{R}^{n} \times \mathbb{R}^{s}|\eta \in D(N_{D}\circ \tilde{q})(\bar{p},\bar{x},\lambda)(0,0)\}
\end{equation}
and the condition $0 = \nabla S_{2}(\bar{d},y^*)(\xi)$ amounts to
\begin{equation}\label{eq-206}
0 = \nabla (b(\bar{p},\bar{x})^{T}\lambda)
\left[ \begin{array}{c}
\xi_{1}\\
\xi_{2}
\end{array}\right] +
 b(\bar{p},\bar{x})^{T}\xi_{3}.
\end{equation}
By comparing (\ref{eq-205}) and (\ref{eq-206}) it follows directly that $\xi_{1}=0, \xi_{2}=0$ and it remains to show that the conditions $\eta \in D(N_{D}\circ \tilde{q})(\bar{p},\bar{x},\lambda)(0,0), 0 = b(\bar{p},\bar{x})^{T}\eta$ imply $\eta = 0$. Clearly,
\[
T_{\gph( N_{D}\circ \tilde{q})}(\bar{p},\bar{x},\bar{\lambda})\subset \left\{(h,k,\eta) \left| \left[\begin{array}{c}
\nabla \tilde{q}(\bar{p},\bar{x})(h,k)\\
\eta
\end{array}\right] \in T_{\gph N_{D}}(\tilde{q}(\bar{p},\bar{x}),\lambda)
\right. \right\}.
\]
It follows that $\eta \in D( N_{D}\circ \tilde{q})(\bar{p},\bar{x},\lambda)(0,0)$ implies
 that $(0,\eta) \in T_{\gph N_{D}}(\tilde{q}(\bar{p},\bar{x}),\lambda)$. Due to the polyhedrality of $D$, one has (cf. \cite[page 1093]{DoRo96})
\[
T_{\gph N_{D}}(\tilde{q}(\bar{p},\bar{x}),\lambda)=\{(a,b)\in \mathbb{R}^{s} \times \mathbb{R}^{s}| a \in
\mathcal{K}_{D} (\tilde{q}(\bar{p},\bar{x}),\lambda), b \in
 \mathcal{K}_{D} (\tilde{q}(\bar{p},\bar{x}), \lambda)^{\circ}, \langle a,b\rangle=0 \},
\]
from which we infer that
 $\eta \in N_{D}(\tilde{q}(\bar{p},\bar{x}))+[\lambda] \subset {\rm sp}~ N_{D}(\tilde{q}(\bar{p},\bar{x}))$.  Consequently, $\xi_{3}= \eta = 0$ by virtue of (A) and we are done.
\endproof

As the last auxiliary result we will now estimate the directional limiting coderivative of $S_{1}$. Clearly, $S_{1}=\Sigma \circ \Omega$, where $\Omega: \mathbb{R}^{m+n} \rightarrow \mathbb{R}^{m+n} \times \mathbb{R}^{m+n} $ is defined by
\[
\Omega(p,x)=
\left[ \begin{array}{c}
(p,x)\\
(p,x)
\end{array}\right] ~ ({\rm two~ copies}),
\]
and $\Sigma : \mathbb{R}^{m+n} \times \mathbb{R}^{m+n} \rightrightarrows   \mathbb{R}^{m+n} \times \mathbb{R}^{s}$ is defined by
\[
\Sigma(a_{1},a_{2})=
\left[ \begin{array}{c}
 a_{1}\\
 (N_{D}\circ \tilde{q})(a_{2})
\end{array}\right].
\]
\begin{lemma}\label{Lem.3}
Consider a direction $(h,k,\eta)\in\mathbb{R}^{m} \times \mathbb{R}^{n} \times \mathbb{R}^{s}$ and a point $\lambda $ such that $ (\bar{p},\bar{x},\lambda)\in S_{1}(\bar{p},\bar{x})$. Then one has for any $d^{*}=(d^{*}_{1},d^{*}_{2},d^{*}_{3})\in \mathbb{R}^{m} \times \mathbb{R}^{n} \times \mathbb{R}^{s} $ the inclusion
\begin{equation}\label{eq-207}
D^{*}S_{1}((\bar{p},\bar{x},\lambda);(h,k,(h,k,\eta)))(d^{*})\subset
\left[ \begin{array}{c}
d^{*}_{1}\\
d^{*}_{2}
\end{array}\right] + D^{*}(N_{D}\circ \tilde{q})((\bar{p},\bar{x},\lambda); (h,k,\eta))(d^{*}_{3}).
\end{equation}
\end{lemma}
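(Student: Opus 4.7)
The plan is to exploit the decomposition $S_1=\Sigma\circ\Omega$ stated just before the lemma. Because $\Omega$ is affine (the ``diagonal doubling'' map $(p,x)\mapsto((p,x),(p,x))$), its Jacobian is constant, its graph is a linear subspace, and all qualification conditions needed for a chain rule for directional limiting coderivatives are automatic. One may therefore apply the calculus developed in \cite{BGO} (which the paper relies on throughout) to get an inclusion of the form
\[D^*S_1\big((\bar p,\bar x,\bar p,\bar x,\lambda);(h,k,h,k,\eta)\big)(d_1^*,d_2^*,d_3^*)\subset \nabla\Omega^T\,D^*\Sigma\big(\Omega(\bar p,\bar x),(\bar p,\bar x,\lambda);(\Omega(h,k),(h,k,\eta))\big)(d_1^*,d_2^*,d_3^*),\]
where $\nabla\Omega^T$ is the ``summation'' map that adds the two $\R^{m+n}$-components produced by the doubling.

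Next I would analyse $D^*\Sigma$ via the product structure of its graph: after reordering coordinates, $\gph\Sigma$ is the Cartesian product of the graph of the identity on $\R^{m+n}$ with $\gph(N_D\circ\tilde q)$. Since the directional limiting normal cone to a Cartesian product is the product of the directional limiting normal cones in the corresponding direction components, $D^*\Sigma$ splits into two independent pieces. The identity piece forces the first $\R^{m+n}$-component of $D^*\Sigma(\cdot)(d_1^*,d_2^*,d_3^*)$ to equal the single element $(d_1^*,d_2^*)$, while the second $\R^{m+n}$-component ranges precisely over $D^*(N_D\circ\tilde q)\big((\bar p,\bar x,\lambda);(h,k,\eta)\big)(d_3^*)$. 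Feeding the pair through $\nabla\Omega^T$ adds them and yields exactly the inclusion \eqref{eq-207}.

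The analytical content is minimal once these two ingredients are in place; the main care needed is notational, namely tracking how the direction $(h,k)$ is doubled by $\Omega$ while the direction $\eta$ passes only through the second block of $\Sigma$. As a self-contained alternative avoiding any external chain-rule theorem, one could compute $\hat N_{\gph S_1}$ directly at a perturbed point on $\gph S_1$: writing the defining inequality of the Fr\'echet normal cone together with the ``diagonal'' constraint that the two copies of $(p,x)$ coincide on $\gph S_1$ shows that any $(a^*,b^*,c^*,d^*,e^*)\in \hat N_{\gph S_1}$ corresponds to $(a^*+c^*,b^*+d^*,e^*)\in \hat N_{\gph(N_D\circ\tilde q)}$ at the same base point. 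Passing to the directional limit along sequences $t_\nu\downarrow 0$, $(h_\nu,k_\nu,\eta_\nu)\to (h,k,\eta)$ (recalling that the diagonal structure forces the two ``input'' direction copies to agree) yields \eqref{eq-207} directly. I expect this direct route to be essentially routine; the only subtle point is making the product splitting of the directional normal cone rigorous, which is where citing \cite{BGO} is the cleanest option.
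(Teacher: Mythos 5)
Your first route is essentially the paper's: the same decomposition $S_1=\Sigma\circ\Omega$, the chain rule from \cite{BGO} (Corollary 5.1 there), and then the splitting of $D^*\Sigma$ across the identity block and the $\gph (N_D\circ\tilde q)$ block. The one genuine objection concerns your justification of the qualification condition. Affineness of the inner map $\Omega$ does \emph{not} by itself make the chain-rule qualification automatic: $\gph S_1$ is the preimage of $\gph\Sigma$ under the injective but non-surjective linear map $(p,x,v)\mapsto(p,x,p,x,v)$, and $\ker\nabla\Omega(\pb,\xb)^T=\{(a^*,-a^*)\mv a^*\in\R^{m+n}\}$ is nontrivial, so a condition coupling $\nabla\Omega^T$ with normals to $\gph\Sigma$ really has to be checked (for a general closed set in place of $\gph\Sigma$ the resulting coderivative inclusion can fail even when the inner map is linear). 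The paper verifies exactly such a condition, namely the implication \eqref{eq-208}, a strengthened non-directional variant of condition (32) in \cite{BGO}: there $0=\nabla\Omega(\pb,\xb)^T(a_1^*,a_2^*)=a_1^*+a_2^*$ together with $-(a_1^*,a_2^*)\in D^*\Sigma(\cdot)(0,0)$ forces $a_1^*=0$ \emph{because the first block of $\Sigma$ is the identity}, whence $a_2^*=0$. So the ``automaticity'' you invoke comes from the specific structure of $\Sigma$, not from $\Omega$ being affine; with that verification inserted, your first route coincides with the paper's proof (and the product inclusion $N_{A\times B}((a,b);(u,v))\subset N_A(a;u)\times N_B(b;v)$ you rely on does hold and suffices for the upper estimate).

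Your alternative direct route is correct and genuinely different from the paper, and it quietly repairs the above issue. Since $\gph S_1$ is the image of $\gph(N_D\circ\tilde q)$ under the injective linear map $L(p,x,u)=(p,x,p,x,u)$, tangent and regular normal cones transform exactly: $(a^*,b^*,c^*,d^*,e^*)\in\hat N_{\gph S_1}(p,x,p,x,u)$ if and only if $L^T(a^*,b^*,c^*,d^*,e^*)=(a^*+c^*,b^*+d^*,e^*)\in\hat N_{\gph(N_D\circ\tilde q)}(p,x,u)$; moreover points of $\gph S_1$ approaching $(\pb,\xb,\pb,\xb,\lambda)$ in a direction $(h,k,(h',k',\eta))$ exist only when $(h',k')=(h,k)$, so passing to the directional limit yields \eqref{eq-207} directly, in fact as an equality of the corresponding directional limiting normal cones rather than a mere estimate. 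This avoids \cite{BGO} altogether and is arguably the cleaner argument for this particular lemma; what it buys is self-containedness and sharpness, while the paper's chain-rule route has the advantage of fitting into the same calculus machinery that is needed anyway for Theorem \ref{Thm.3}.
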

\proof
The statement follows from \cite[Corollary 5.1]{BGO}, provided we verify the respective subregularity condition. To this aim we observe that the implication
\begin{equation}\label{eq-208}
\left. \begin{aligned}
& 0 \in \nabla \Omega (\bar{p},\bar{x})^{T}(a^{*}_{1}, a^{*}_{2})\\
& - (a^{*}_{1}, a^{*}_{2})\in D^{*}\Sigma ((\bar{p},\bar{x}), (\bar{p},\bar{x}), (\bar{p},\bar{x},\lambda))(0,0)
\end{aligned}\right\} \Rightarrow  a^{*}_{1}=0, a^{*}_{2}=0
\end{equation}
is fulfilled. Indeed, the relations on the left-hand side of (\ref{eq-208}) imply that $a^{*}_{1}+ a^{*}_{2}=0$ and $a^{*}_{1}=0$, whence $a^{*}_{2}=0$ as well. On the other hand, implication (\ref{eq-208}) is a strengthened (non-directional) variant of condition (32) in \cite{BGO}, which ensures the subregularity condition in \cite[Corollary 5.1]{BGO}. We obtain thus that
\[
 \begin{split}
D^{*}S_{1}((\bar{p},\bar{x},\lambda);(h,k,(h,k,\eta)))(d^{*})  & \subset
\nabla \Omega (\bar{p},\bar{x})^{T}\circ D^{*}\Sigma (((\bar{p},\bar{x}), (\bar{p},\bar{x}), (\bar{p},\bar{x},\lambda));\\
& ((h,k),(h,k),(h,k,\eta)))(d^{*}),
\end{split}
\]
which directly leads to inclusion (\ref{eq-207}).
\endproof
We are now in position to compute an estimate of the directional limiting coderivative of $Q$ at the point  $(\bar{p},\bar{x},\bar{y}^{*})$ in the direction $(h,k,l)$.
\begin{theorem}\label{Thm.3}
Let $y^*\in\R^n$ be given and let $\lambda \in \mathbb{R}^{s}$ be (uniquely) given by the relations (\ref{eq-55})
and $\eta \in \mathbb{R}^{s}$ be (uniquely) given by
 \begin{equation}\label{eq-700}
l = (\nabla(b(\bar{p},\bar{x})^{T} \lambda) (h,k) + b(\bar{p},\bar{x})^{T}\eta, ~\eta \in ~N_{\mathcal{K}(\tilde{q}(\bar{p},\bar{x}),\lambda)}(\nabla \tilde{q}(\bar{p},\bar{x})(h,k)).
 \end{equation}
  Assume that the mapping (\ref{EqDirNonDegen}) is metrically subregular at $((\bar{p},\bar{x},\lambda),0)$ in direction $(h,k,\eta)$.

 Then for any $v^{*}\in \mathbb{R}^{n}$ one has the estimate
\begin{multline}\label{eq-209}
D^{*}Q((\bar{p},\bar{x},y^*);(h,k,l))  (v^{*}) \subset \nabla(b(\bar{p},\bar{x})^{T} \lambda)^{T}v^{*} +\\
 \nabla \tilde{q}(\bar{p},\bar{x})^{T}D^{*}N_{D}((\tilde{q}(\bar{p},\bar{x}),\lambda);(\nabla \tilde{q}(\bar{p},\bar{x})(h,k),\eta))(b(\bar{p},\bar{x})v^{*}).
\end{multline}

\end{theorem}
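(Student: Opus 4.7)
The plan is to decompose $Q=S_2\circ S_1$ exactly as set up in the paragraphs preceding the statement, and then apply two successive chain rules for directional limiting coderivatives taken from \cite{BGO}. Since $S_2$ is smooth and, by Lemma \ref{Lem.1}, the intermediate set $\Xi(\bar p,\bar x,y^*)=\{(\bar p,\bar x,\lambda)\}$ is a singleton, the outer chain rule involves only a single intermediate multiplier. The direction in the image space of $S_1$ that corresponds to $l$ is uniquely $(h,k,\eta)$, because $\nabla S_2(\bar p,\bar x,\lambda)(h,k,\eta)=\nabla(b(\bar p,\bar x)^T\lambda)(h,k)+b(\bar p,\bar x)^T\eta=l$ by the defining relation \eqref{eq-700}. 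Lemma \ref{Lem.2} supplies the (stronger, non-directional) qualification condition needed to apply the outer chain rule, giving
\[D^*Q((\bar p,\bar x,y^*);(h,k,l))(v^*)\subset D^*S_1\big((\bar p,\bar x,\lambda);(h,k,(h,k,\eta))\big)\big(\nabla S_2(\bar p,\bar x,\lambda)^T v^*\big).\]

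Since the components of $\nabla S_2(\bar p,\bar x,\lambda)^T v^*$ are $\nabla_p(b^T\lambda)^T v^*$, $\nabla_x(b^T\lambda)^T v^*$ and $b(\bar p,\bar x) v^*$, Lemma \ref{Lem.3} reduces the right-hand side to
\[\nabla(b(\bar p,\bar x)^T\lambda)^T v^* + D^*(N_D\circ\tilde q)\big((\bar p,\bar x,\lambda);(h,k,\eta)\big)(b(\bar p,\bar x)v^*).\]
The final step is a chain rule for the directional limiting coderivative of $N_D\circ\tilde q$, the composition of a set-valued mapping with the smooth inner function $\tilde q$, again from \cite{BGO}. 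Its required qualification is precisely the directional metric subregularity of the mapping $F$ from \eqref{EqDirNonDegen} at $((\bar p,\bar x,\lambda),0)$ in direction $(h,k,\eta)$, which is the standing hypothesis of the theorem. This yields
\[D^*(N_D\circ\tilde q)\big((\bar p,\bar x,\lambda);(h,k,\eta)\big)(b(\bar p,\bar x)v^*)\subset \nabla\tilde q(\bar p,\bar x)^T D^*N_D\big((\tilde q(\bar p,\bar x),\lambda);(\nabla\tilde q(\bar p,\bar x)(h,k),\eta)\big)(b(\bar p,\bar x)v^*),\]
and substituting back produces the announced estimate \eqref{eq-209}.

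The main obstacle is the bookkeeping of the chain rule hypotheses of \cite{BGO} against the present setting: one has to ensure that the intermediate directions are correctly propagated through each step and that the union over intermediate multipliers (generic in such chain rules) collapses to a single term here, which is guaranteed by assumption (A) together with Lemma \ref{Lem.1}. Beyond this verification, all remaining work consists of routine manipulations with the gradients of the smooth maps $S_2$ and $\tilde q$.
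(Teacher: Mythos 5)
Your proposal is correct and follows essentially the same route as the paper: the decomposition $Q=S_2\circ S_1$, the chain rule of \cite{BGO} combined with Lemmas \ref{Lem.1}--\ref{Lem.3}, identification of the unique intermediate direction $(h,k,\eta)$ via \eqref{eq-700} and assumption (A), and finally the chain rule for $N_D\circ\tilde q$ under the directional metric subregularity of \eqref{EqDirNonDegen}. The only part you pass over lightly is the justification that every admissible intermediate direction has the form $(h,k,\eta)$ with $\eta\in N_{\K_D(\tilde q(\pb,\xb),\lb)}(\nabla\tilde q(\pb,\xb)(h,k))$ (the paper computes $DS_1$ via the tangent cone to $\gph(N_D\circ\tilde q)$ for this), but the needed inclusion and the uniqueness via (A) are exactly as you indicate, so this is a matter of detail rather than a gap.
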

\proof
We observe first that by virtue of (A) and Lemma \ref{Lem.1} all assumptions of \cite[Corollary 5.2]{BGO} are fulfilled and, thanks to Lemma \ref{Lem.1} and  Lemma \ref{Lem.2}, the inclusion in \cite[formula (26)]{BGO} simplifies to
\begin{equation}\label{eq-210}
D^{*}Q ((\bar{p},\bar{x},y^*);(h,k,l))\subset
\bigcup\limits_{\scriptstyle \xi \in D S_{1}(\bar{p},\bar{x},\bar{u})(h,k) \atop                   \scriptstyle l= \nabla S_{2}(\bar{u})\xi}
D^{*}S_{1}((\bar{p},\bar{x},\bar{u});(h,k,\xi))\circ \nabla S_{2}(\bar{u})^{T},
\end{equation}
where $\bar{u}=(\bar{p},\bar{x}, \lambda)$. The directional limiting coderivative of $S_{1}$ has been estimated in Lemma \ref{Lem.3} and so we compute now the graphical derivative of $S_{1}$ and the Jacobian of $S_{2}$.
Since
\begin{equation}\label{eq-600}
(\bar{p},\bar{x},\lambda) \in \gph (N_{D}\circ \tilde{q})~~
\Leftrightarrow~~
(\tilde{q}(\bar{p},\bar{x}),\lambda) \in \gph N_{D},
\end{equation}
we have $(h,k,\eta)\in T_{\gph( N_{D}\circ \tilde{q})}(\bar{p},\bar{x},\lambda)$ if and only if there are sequences $t_\nu\downarrow 0$, $(h_\nu,k_\nu,\eta_\nu)\to (h,k,\eta)$ such that
\[\big(\tilde q(\pb+t_\nu h_\nu,\xb+t_\nu k_\nu),\lambda+t_\nu \eta_\nu\big)\in\gph N_D\ \mbox{ for all }\nu\]
and it follows that
\[
T_{\gph( N_{D}\circ \tilde{q})}(\bar{p},\bar{x},\lambda)\subset R:=\left\{(h,k,\eta) \left|
\left[\begin{array}{c}
\nabla \tilde{q}(\bar{p},\bar{x})(h,k)\\
\eta
\end{array}\right]
 \in T_{\gph N_{D}}(\tilde{q}(\bar{p},\bar{x}),\lambda)
 \right.\right\}.
\]
On the other hand, given $(h,k,\eta)\in R$, there is a sequence $t_\nu\downarrow 0$ such that
\[\dist{(\tilde q(\pb,\xb)+t_\nu\nabla \tilde q(\pb,\xb)(h,k),\lambda+t_\nu \eta), \gph N_D}=\oo(t_\nu).\]
Hence, $\dist{(\tilde q(\pb +t_\nu h,\xb+t_\nu k),\lambda+t_\nu \eta), \gph N_D}=\oo(t_\nu)$ and we can employ the metric subregularity of (\ref{EqDirNonDegen})
 at $((\bar{p},\bar{x},\lambda),0)$ to obtain $(p_\nu,x_\nu,\lambda_\nu)$ satisfying
 \[\norm{(p_\nu,x_\nu,\lambda_\nu)-(\pb +t_\nu h,\xb+t_\nu k,\lb+t_\nu \eta)}=\oo(t_\nu), \ \big(\tilde q(p_\nu,x_\nu),\lambda_\nu\big)\in\gph N_D.\]
 We conclude $(h,k,\eta)\in T_{\gph( N_{D}\circ \tilde{q})}(\bar{p},\bar{x},\lambda)$ and $R\subset T_{\gph( N_{D}\circ \tilde{q})}(\bar{p},\bar{x},\lambda)$ follows.
 Hence\\$T_{\gph( N_{D}\circ \tilde{q})}(\bar{p},\bar{x},\lambda)=R$   and
\if{we can employ the metric subregularity of (\ref{EqDirNonDegen})
 at $((\bar{p},\bar{x},\bar{\lambda}),0)$ and obtain that
\[
T_{\gph( N_{D}\circ \tilde{q})}(\bar{p},\bar{x},\bar{\lambda})= \left\{(h,k,\eta) \left|
\left[\begin{array}{c}
\nabla \tilde{q}(\bar{p},\bar{x})(h,k)\\
\eta
\end{array}\right]
 \in T_{\gph N_{D}}(\tilde{q}(\bar{p},\bar{x}),\bar{\lambda})
\right. \right\}.
\]}\fi
this relation, together with \cite[Example 4A.4]{DoRo}, implies that
\[
DS_{1}(\bar{p},\bar{x},\bar{u})(h,k) = \left\{(h,k,\eta)\left| \eta \in N_{\K_D(\tilde{q}(\bar{p},\bar{x}),\lambda)}(\nabla \tilde{q}(\bar{p},\bar{x})(h,k))\right.\right\}.
\]
Further, by a simple calculation we obtain that
\[
\nabla S_{2}(\bar{u})= \left[\nabla(b(\bar{u}_{1}, \bar{u}_{2})^{T}\bar{u}_{3}, ~ b(\bar{u}_{1}, \bar{u}_{2})^{T})\right],
\]
and thus the union in (\ref{eq-210}) is taken over all $\xi = (h,k,\eta)$ satisfying (\ref{eq-700}).
The uniqueness of $\eta$ follows from the comparison of (\ref{eq-700}) with (\ref{EqEta}) and Remark \ref{Rem.1}.
From  (\ref{eq-210}) we get now the inclusion

\begin{equation}\label{eq-212}
\begin{split}
D^{*}Q ((\bar{p},\bar{x},y^*);(h,k,l))(v^{*}) & \subset (\nabla(b(\bar{p},\bar{x})^{T} ) \lambda)^{T}v^{*} +\\
& D^{*}(N_{D}\circ \tilde{q})(\bar{p},\bar{x},\lambda);(h,k,\eta))(b(\bar{p},\bar{x})v^{*}),
\end{split}
\end{equation}
and it remains to rewrite the second term on the right-hand side of (\ref{eq-212}) in a more tractable form. To this aim we invoke \cite[Corollary 3.2]{BGO}. Indeed, applying the equivalence (\ref{eq-600}) as in the computation of $DS_{1}(\bar{p},\bar{x},\bar{u})(h,k)$,
under the assumed directional metric subregularity of mapping     (\ref{EqDirNonDegen}), we obtain the inclusion
\[
\begin{split}
D^{*}(N_{D}\circ \tilde{q}) & ((\bar{p},\bar{x},\lambda);(h,k,\eta))(b(\bar{p},\bar{x})v^{*})\subset\\
& \nabla \tilde{q}(\bar{p},\bar{x})^{T}D^{*}N_{D} ((\tilde{q}(\bar{p},\bar{x}),\lambda);
 (\nabla\tilde{q}(\bar{p},\bar{x}) (h,k), \eta))(b(\bar{p},\bar{x})v^{*}),
\end{split}
\]
and the proof is complete.
\endproof

\if{
Note that the metric subregularity of  (\ref{EqDirNonDegen}) is implied
\begin{equation}\label{eq-114}
\left.\begin{array}{c}
\nabla_{p}\tilde{q}(\bar{p},\bar{x})^{T}a=0\\
\nabla_{x}\tilde{q}(\bar{p},\bar{x})^{T}a=0\\
a \in N_{\gph N_{D}}((\tilde{q}(\bar{p},\bar{x}),\bar{\lambda}); (\nabla \tilde{q}(\bar{p},\bar{x})(h,k),\eta))(0)
\end{array}\right\} \Rightarrow a = 0,
 \end{equation}
 which follows from \cite[Corollary 1]{GfrKl16},  see also \cite[Proposition 2.2]{BGO}. \green{Brauchen wir \eqref{eq-114}? Das ist genau Proposition \ref{LemDirSubreg}}
}\fi
We can now combine inclusion (\ref{eq-209}) with relation (\ref{eq-800}) to obtain a formula for \\ $D^{*}M((\bar{p},\bar{x},0);(h,k,0))$ in terms of problem data. To this  purpose we introduce $\bar{y}^{*}= - f(\bar{p},\bar{x})$ and denote by $\bar{\lambda}$ the (unique) solution of (\ref{eq-61}). Under the assumptions of Theorem \ref{Thm.3} we obtain the estimate
\[
\begin{split}
D^{*}M(\bar{p},\bar{x},0); & (h,k,0))(v^{*})\subset \nabla \mathcal{L}_{\bar{\lambda}}(\bar{p},\bar{x})^{T} v^{*}+\\
& \nabla \tilde{q}(\bar{p},\bar{x})^{T}D^{*}N_{D}((\tilde{q}(\bar{p},\bar{x})\bar{\lambda});
(\nabla \tilde{q}(\bar{p},\bar{x})(h,k),\eta))(b(\bar{p},\bar{x})v^{*})
\end{split}
\]
which will be utilized in the next section.

\section{On the Aubin property of the solution map}

Combining Theorem \ref{Thm.1} with the formulas for $DM(\bar{p},\bar{x},\bar{z})(h,k)$ and
 $D^{*}M((\bar{p},\bar{x},\bar{z});(h,k,0))$ derived in Sections 3 and 4, respectively, we arrive at the following result.
\begin{theorem}\label{Thm.4}
Assume that $\bar{\lambda}$ is the (unique) solution of system (\ref{eq-61}) and for every $h \in \mathbb{R}^{m}$ there is some $k \in\mathbb{R}^{n}$ and some $\eta \in \mathbb{R}^{s}$
such that
\begin{equation}\label{eq-500}
0 = \nabla\Lag_\lb(\bar{p},\bar{x})(h,k)+ b(\bar{p},\bar{x})^{T}\eta,~~
\eta \in N_{\mathcal{K}_{D}(\tilde{q}(\bar{p},\bar{x}),\bar{\lambda})}(\nabla \tilde{q}(\bar{p},\bar{x})(h,k)).
\end{equation}
Further assume that for every nonzero pair $(h,k)\in \mathbb{R}^{m} \times \mathbb{R}^{n}$ and the corresponding (unique) $\eta \in \mathbb{R}^{s}$ satisfying (\ref{eq-500}) the mapping $F$ given by (\ref{EqDirNonDegen}) is metrically subregular at $((\bar{p},\bar{x},\bar{\lambda}),0)$ in direction $(h,k,\eta)$ and the implication
\begin{equation}\label{eq-301}
\left.\begin{aligned}
& (p^{*},0)=\nabla\Lag_\lb(\bar{p},\bar{x})^{T}v^{*} +
\nabla \tilde{q}(\bar{p},\bar{x})^{T}w\\
& w \in D^{*}N_{D}((\tilde{q}(\bar{p},\bar{x}),\bar{\lambda}); (\nabla \tilde{q}(\bar{p},\bar{x})(h,k),\eta))(b(\bar{p},\bar{x})v )
\end{aligned}\right\} \Rightarrow p^{*}=0, v  = 0
\end{equation}
is fulfilled. Then the solution map defined via (\ref{eq-200}) and (\ref{eq-50}) has the Aubin property around $(\bar{p},\bar{x})$. Moreover, one has
\begin{equation}\label{eq-302}
DS(\bar{p},\bar{x})(h) = \left\{  k \left| 0 \in \nabla\Lag_\lb(\bar{p},\bar{x})(h,k)+ b(\bar{p},\bar{x})^{T}
 N_{\mathcal{K}_{D}(\tilde{q}(\bar{p},\bar{x}),\bar{\lambda})}(\nabla \tilde{q}(\bar{p},\bar{x})(h,k))\right.\right\}.
\end{equation}
\end{theorem}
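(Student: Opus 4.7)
The plan is to apply Theorem \ref{Thm.1} to the multifunction $M$ from (\ref{eq-53}), making use of the fact that, under assumption (A), the generalized equation (\ref{eq-400}) is locally equivalent with (\ref{eq-53}) around $(\bar p,\bar x)$, so the solution map $S$ coincides with the one associated with the simplified model in a neighborhood of $(\bar p,\bar x)$. All subsequent work goes through assumptions (i) and (iv) of Theorem \ref{Thm.1}, with the computations from Sections 3 and 4 providing the required expressions for $DM(\bar p,\bar x,0)$ and $D^{*}M((\bar p,\bar x,0);(h,k,0))$.

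First I would verify condition (i) of Theorem \ref{Thm.1}. By the graphical derivative formula derived at the end of Section 3 (the consequence of Theorem \ref{ThGraphDer}), for any $h,k$ we have
\[
DM(\bar p,\bar x,0)(h,k)=\nabla\Lag_{\lb}(\bar p,\bar x)(h,k)+b(\bar p,\bar x)^{T}N_{\mathcal{K}_{D}(\tilde q(\bar p,\bar x),\lb)}\big(\nabla\tilde q(\bar p,\bar x)(h,k)\big).
\]
Hence the existence, for every $h$, of $k$ and $\eta$ satisfying (\ref{eq-500}) is precisely the statement that $0\in DM(\bar p,\bar x,0)(h,k)$ for some $k$, which is condition (i).

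Next I would verify condition (iv). Pick a nonzero $(h,k)$ with $0\in DM(\bar p,\bar x,0)(h,k)$; then, by Remark \ref{Rem.1} applied with $w=(h,k,k)$ and $v=0$, the associated $\eta$ is uniquely determined by (\ref{eq-700}) with $l=-\nabla f(\bar p,\bar x)(h,k)$, which rearranges to (\ref{eq-500}). By the standing hypothesis, the mapping $F$ from (\ref{EqDirNonDegen}) is metrically subregular at $((\bar p,\bar x,\lb),0)$ in direction $(h,k,\eta)$, so Theorem \ref{Thm.3} together with identity (\ref{eq-800}) yields the inclusion
\[
D^{*}M((\bar p,\bar x,0);(h,k,0))(v^{*})\subset \nabla\Lag_{\lb}(\bar p,\bar x)^{T}v^{*}+\nabla\tilde q(\bar p,\bar x)^{T}D^{*}N_{D}\big((\tilde q(\bar p,\bar x),\lb);(\nabla\tilde q(\bar p,\bar x)(h,k),\eta)\big)\big(b(\bar p,\bar x)v^{*}\big).
\]
Consequently, whenever $(p^{*},0)\in D^{*}M((\bar p,\bar x,0);(h,k,0))(v^{*})$, there exists $w$ in the directional coderivative of $N_{D}$ at $b(\bar p,\bar x)v^{*}$ such that the equalities on the left-hand side of (\ref{eq-301}) hold, and the assumed implication forces $p^{*}=0$ and $v^{*}=0$. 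This is exactly condition (iv) of Theorem \ref{Thm.1}.

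With both (i) and (iv) in hand, Theorem \ref{Thm.1} guarantees the Aubin property of $S$ around $(\bar p,\bar x)$ and yields the identity
\[
DS(\bar p,\bar x)(h)=\{k\mid 0\in DM(\bar p,\bar x,0)(h,k)\},
\]
which, together with the graphical derivative formula recalled above, produces precisely the representation (\ref{eq-302}). The only non-routine point is the bookkeeping required to match the coderivative estimate of Theorem \ref{Thm.3} with the implication in (\ref{eq-301}) — in particular, the observation that the unique $\eta$ attached to the pair $(h,k)$ via (\ref{eq-500}) coincides with the unique $\eta$ furnished by Theorem \ref{Thm.3} for the direction $(h,k,-\nabla f(\bar p,\bar x)(h,k))$; this is immediate from the uniqueness clause in Remark \ref{Rem.1}, so no real obstacle remains.
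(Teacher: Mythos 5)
Your proposal is correct and follows essentially the same route as the paper, which gives no separate proof of Theorem \ref{Thm.4} beyond the remark that it results from combining Theorem \ref{Thm.1} (in its variant with condition (iv)) with the graphical-derivative formula of Section 3 and the directional coderivative estimate of Theorem \ref{Thm.3} via \eqref{eq-800} --- exactly the two verifications you carry out, including the identification of the $\eta$ from \eqref{eq-500} with the one in \eqref{eq-700} for $l=-\nabla f(\bar{p},\bar{x})(h,k)$. Only a cosmetic point: Remark \ref{Rem.1} should be invoked with $v=-\nabla f(\bar{p},\bar{x})(h,k)\in D\Psi(\bar z,-f(\bar{p},\bar{x}))(h,k,k)$ rather than with $v=0$, but the uniqueness conclusion you draw from it is the intended one.
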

Following Theorem \ref{Thm.1}, the implication (\ref{eq-301}) could be weakened by omitting the requirement $v  = 0$ on its right-hand side. Then, however, we have to impose an additional requirement that the mapping (\ref{eq-50}) is metrically subregular at $(\bar{p},\bar{x},0)$.

If the constraint mapping $q$ (and hence also $\tilde{q}$) does not depend on $p$, then (\ref{eq-500}) attains the form
\begin{equation*}
0 = \nabla\Lag_\lb(\bar{p},\bar{x})(h,k)+ b(\bar{x})^{T}\eta,~~
\eta \in  N_{\mathcal{K}_{D}(\tilde{q}(\bar{x}),\bar{\lambda})}(\nabla \tilde{q}(\bar{x})k)
\end{equation*}
and (\ref{eq-301}) reduces to a substantially more tractable form
\begin{equation*}
\left.\begin{aligned}
& 0= \nabla_{2}\Lag_\lb(\bar{p},\bar{x})^{T}v  +
\nabla \tilde{q}(\bar{x})^{T}w\\
& w \in D^{*}N_{D}((\tilde{q}(\bar{x}),\bar{\lambda}); (\nabla \tilde{q}(\bar{x})k,\eta))(b(\bar{x})v )
\end{aligned}\right\} \Rightarrow  v  = 0.
\end{equation*}

The polyhedrality of $D$ enables us to avoid  the computation of directional limiting coderivatives and to replace the verification of (\ref{eq-301}) by a simpler procedure. The key argument comes from the already mentioned \cite[Theorem 2.12]{GO3}.

\begin{theorem}\label{Thm.5}
In the setting of Theorem \ref{Thm.4} replace the implication (\ref{eq-301})
by the assumption that for every pair of faces $\mathcal{F}_{1}, \mathcal{F}_{2}$  of the critical cone $\mathcal{K}_{D} (\tilde{q}(\bar{p},\bar{x}),\bar{\lambda})$ with $\nabla \tilde{q}(\bar{p},\bar{x})(h,k) \in \mathcal{F}_{2} \subset \mathcal{F}_{1} \subset [\eta]^{\perp}$ there holds
\begin{equation}\label{eq-305}
 \nabla_{2}\tilde{q}(\bar{p},\bar{x})^{T}\mu = 0, \mu \in (\mathcal{F}_{1}- \mathcal{F}_{2})^{\circ} \Rightarrow \nabla_{1}\tilde{q}(\bar{p},\bar{x})^{T}\mu = 0.
\end{equation}
and for every $w \neq 0$ with $b(\bar{p},\bar{x})w \in \mathcal{F}_{1}- \mathcal{F}_{2} $ there is some $\tilde{w}$ with $  \nabla_{2}\tilde{q}(\bar{p},\bar{x})\tilde{w} \in \mathcal{F}_{1}- \mathcal{F}_{2}$ and
\begin{equation}\label{eq-306}
w^{T} \nabla_{2} \Lag_\lb(\bar{p},\bar{x})\tilde{w}>0.
\end{equation}
Then all assertions of Theorem \ref{Thm.4} remain valid.
\end{theorem}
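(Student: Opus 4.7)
The plan is to derive Theorem~\ref{Thm.5} from Theorem~\ref{Thm.4} by showing that, under the standing hypotheses, the two conditions \eqref{eq-305} and \eqref{eq-306} together imply the implication \eqref{eq-301}. The decisive tool is the explicit description of the directional limiting normal cone to $\gph N_D$ supplied by \cite[Theorem~2.12]{GO3}: because $D$ is convex polyhedral, the set $N_{\gph N_D}((\tilde{q}(\pb,\xb),\lb);(\nabla \tilde{q}(\pb,\xb)(h,k),\eta))$ equals the union of the product cones $K^\circ\times K$ with $K=\F_1-\F_2$, taken over all pairs of faces $\F_1,\F_2$ of the critical cone $\K_D(\tilde{q}(\pb,\xb),\lb)$ satisfying $\nabla \tilde{q}(\pb,\xb)(h,k)\in\F_2\subset\F_1\subset[\eta]^\perp$.

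Given an arbitrary triple $(p^*,v^*,w)$ fulfilling the left-hand side of \eqref{eq-301}, the coderivative membership translates into $(w,-b(\pb,\xb)v^*)\in N_{\gph N_D}(\cdots)$, and the cited tool singles out a specific pair of faces $(\F_1,\F_2)$ for which
\[
w\in(\F_1-\F_2)^\circ\quad\text{and}\quad -b(\pb,\xb)v^*\in\F_1-\F_2.
\]
Splitting the identity $(p^*,0)=\nabla\Lag_\lb(\pb,\xb)^T v^*+\nabla\tilde{q}(\pb,\xb)^T w$ into its $p$- and $x$-components yields
\begin{align*}
p^*&=\nabla_1\Lag_\lb(\pb,\xb)^T v^*+\nabla_1\tilde{q}(\pb,\xb)^T w,\\
0&=\nabla_2\Lag_\lb(\pb,\xb)^T v^*+\nabla_2\tilde{q}(\pb,\xb)^T w,
\end{align*}
and from these two identities I would drive the rest of the argument.

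Suppose, for contradiction, that $v^*\neq 0$. Since $b(\pb,\xb)(-v^*)\in\F_1-\F_2$ and $-v^*\neq 0$, the assumption \eqref{eq-306} applied with $-v^*$ in place of $w$ provides a vector $\tilde{w}$ satisfying $\nabla_2\tilde{q}(\pb,\xb)\tilde{w}\in\F_1-\F_2$ and $(-v^*)^T\nabla_2\Lag_\lb(\pb,\xb)\tilde{w}>0$. Pairing the second identity above with $\tilde{w}$ gives
\[
0=v^{*T}\nabla_2\Lag_\lb(\pb,\xb)\tilde{w}+w^T\nabla_2\tilde{q}(\pb,\xb)\tilde{w};
\]
the first summand is strictly negative by the choice of $\tilde{w}$, while the second summand is non-positive thanks to $w\in(\F_1-\F_2)^\circ$ combined with $\nabla_2\tilde{q}(\pb,\xb)\tilde{w}\in\F_1-\F_2$. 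This contradiction forces $v^*=0$.

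With $v^*=0$ at hand, the $x$-component reduces to $\nabla_2\tilde{q}(\pb,\xb)^T w=0$, while we still have $w\in(\F_1-\F_2)^\circ$. Plugging $\mu:=w$ into \eqref{eq-305} then yields $\nabla_1\tilde{q}(\pb,\xb)^T w=0$, and the $p$-component collapses to $p^*=\nabla_1\tilde{q}(\pb,\xb)^T w=0$. Thus \eqref{eq-301} is verified and Theorem~\ref{Thm.4} applies verbatim, giving the Aubin property and the formula \eqref{eq-302}. I expect the only real subtleties to be notational bookkeeping: the symbol $w$ plays two entirely different roles in \eqref{eq-301} and in \eqref{eq-306}, and the orientation of the inclusion $-b(\pb,\xb)v^*\in\F_1-\F_2$ (rather than $+b(\pb,\xb)v^*$) is precisely what forces the use of $-v^*$ in \eqref{eq-306} and produces the correct sign in the contradiction step.
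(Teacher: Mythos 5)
Your proposal is correct and follows essentially the same route as the paper: both invoke \cite[Theorem 2.12]{GO3} to represent the directional limiting normal cone to $\gph N_D$ via pairs of faces $\F_1,\F_2$ of the critical cone, then rule out $v^*\neq 0$ by applying \eqref{eq-306} to $-v^*$ (the sign coming from $-b(\pb,\xb)v^*\in\F_1-\F_2$) together with the polarity relation $w\in(\F_1-\F_2)^\circ$, and finally use \eqref{eq-305} with $\mu=w$ to force $p^*=0$. The only cosmetic difference is that you argue directly that \eqref{eq-301} holds, whereas the paper phrases the same computation as a proof by contradiction.
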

\proof We shall show that the conditions \eqref{eq-305}, \eqref{eq-306} imply \eqref{eq-301}.
Assume on the contrary that there is some direction $(0,0)\neq (h,k)$ verifying (\ref{eq-500}) together  with some $\eta \in \mathbb{R}^{s}$ and some pair $(p^{*},v)\neq (0,0)$ satisfying
$$
(p^{*},0) \in \nabla\Lag_\lb (\bar{p},\bar{x})^{T}v +
\nabla\tilde{q}(\bar{p},\bar{x})^{T} D^{*}N_{D}((\tilde{q}(\bar{p},\bar{x}),\bar{\lambda}); (\nabla \tilde{q}(\bar{p},\bar{x})(h,k),\eta))(b(\bar{p},\bar{x})v).
$$
Next we utilize \cite[Theorem 2.12]{GO3} to find two faces $\mathcal{F}_{1}, \mathcal{F}_{2}$ of $\mathcal{K}_{D} (\tilde{q}(\bar{p},\bar{x}),\bar{\lambda})$ and $\mu \in (\mathcal{F}_{1}- \mathcal{F}_{2})^{\circ} $ satisfying $\nabla \tilde{q}(\bar{p},\bar{x})(h,k) \in \mathcal{F}_{2} \subset \mathcal{F}_{1} \subset [\eta]^{\perp}, -b(\bar{p},\bar{x})v \in \mathcal{F}_{1}- \mathcal{F}_{2}$, and
\[
(p^{*},0) = \nabla\Lag_\lb (\bar{p},\bar{x})^{T}v +
\nabla\tilde{q}(\bar{p},\bar{x})^{T}\mu.
\]
In particular, we have
\[
\nabla_{2}\Lag_\lb (\bar{p},\bar{x})^{T}v =
-\nabla_{2}\tilde{q}(\bar{p},\bar{x})^{T}\mu
\]
and
\[
p^{*}=\nabla_{1}\Lag_\lb (\bar{p},\bar{x})^{T}v +
\nabla_{1}\tilde{q}(\bar{p},\bar{x})^{T}\mu.
\]
If $v \neq 0$ then, by taking $w = -v$, the imposed assumptions imply the existence of some $\tilde{w}$ with $\nabla_{2}\tilde{q}(\bar{p},\bar{x})\tilde{w}\in \mathcal{F}_{1}- \mathcal{F}_{2}$  fulfilling condition (\ref{eq-306}). This results  in the contradiction
\[
0 < w^{T}\nabla_{2} \Lag_\lb (\bar{p},\bar{x})\tilde{w}=\mu^{T}\nabla_{2}\tilde{q}(\bar{p},\bar{x})\tilde{w}\leq 0,
\]
where the last inequality follows from $\mu \in (\mathcal{F}_{1}- \mathcal{F}_{2})^{\circ}$ and
$\nabla_{2}\tilde{q}(\bar{p},\bar{x})\tilde{w} \in \mathcal{F}_{1}- \mathcal{F}_{2}$. Thus one has $v=0$ implying $\nabla_{2}\tilde{q}(\bar{p},\bar{x})^{T}\mu =0$ and $0 \neq p^{*} = \nabla_{1}\tilde{q}(\bar{p},\bar{x})^{T}\mu$. But from (\ref{eq-305}) we obtain
$\nabla_{1}\tilde{q}(\bar{p},\bar{x})^{T}\mu =0$  and consequently $p^{*}=0$, a contradiction. Hence \eqref{eq-301} holds true.
\endproof

The metric subregularity assumption arising in   Theorem \ref{Thm.4} can be ensured together with condition (\ref{eq-301}) in an elegant way shown in the next statement.

\begin{corollary}\label{cor.1}
Assume that for every $h \in \mathbb{R}^{m}$ there is some $k \in \mathbb{R}^{n}$ and some $\eta \in \mathbb{R}^{s}$ satisfying (\ref{eq-500}) and assume that for every nonzero $(h,k) \in \mathbb{R}^{m} \times \mathbb{R}^{n}, \eta \in \mathbb{R}^{s}$ verifying (\ref{eq-500}) and for every pair of faces $\mathcal{F}_{1}, \mathcal{F}_{2}$ of the critical cone $\mathcal{K}_{D} (\tilde{q}(\bar{p},\bar{x}),\bar{\lambda})$ with $\nabla \tilde{q}(\bar{p},\bar{x})(h,k) \in \mathcal{F}_{2} \subset \mathcal{F}_{1} \subset [\eta]^{\perp}$ there holds
\begin{equation}\label{eq-307}
 \nabla_{2}\tilde{q}(\bar{p},\bar{x})^{T}\mu = 0, ~~\mu \in (\mathcal{F}_{1}- \mathcal{F}_{2})^{\circ} \Rightarrow \mu = 0,
\end{equation}
and for every $w \neq 0$ with $b(\bar{p},\bar{x}) w \in \mathcal{F}_{1}- \mathcal{F}_{2}$ there is some $\tilde{w}$ with $\nabla_{2}\tilde{q}(\bar{p},\bar{x})\tilde{w} \in \mathcal{F}_{1}- \mathcal{F}_{2} $ and
\begin{equation}\label{eq-308}
w^{T} \nabla_{2}\Lag_\lb (\bar{p},\bar{x})\tilde{w}>0.
\end{equation}
Then all assertions of Theorem \ref{Thm.4} remain valid.
\end{corollary}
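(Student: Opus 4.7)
The plan is to deduce Corollary \ref{cor.1} directly from Theorem \ref{Thm.5} by observing that (\ref{eq-307}) serves a double purpose: it is strictly stronger than (\ref{eq-305}), and it also supplies exactly the sufficient condition for directional metric subregularity of the mapping $F$ in (\ref{EqDirNonDegen}) that is required in Theorem \ref{Thm.4} (and left as a standing assumption in Theorem \ref{Thm.5}). Thus the proof reduces to two short verifications plus a citation.

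First, I would argue that the implication (\ref{eq-307}) trivially yields (\ref{eq-305}), since the conclusion $\mu=0$ of (\ref{eq-307}) obviously forces $\nabla_{1}\tilde{q}(\bar{p},\bar{x})^{T}\mu=0$. Since (\ref{eq-308}) coincides verbatim with (\ref{eq-306}) and the existence condition (\ref{eq-500}) is assumed in both statements, the hypotheses of Theorem \ref{Thm.5} are satisfied as soon as the directional metric subregularity of $F$ is verified.

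Second, I would establish that very subregularity via Proposition \ref{LemDirSubreg}. Its sufficient condition, for the given faces $\F_{1},\F_{2}$ with $\nabla\tilde{q}(\bar{p},\bar{x})(h,k)\in\F_{2}\subset\F_{1}\subset[\eta]^{\perp}$, is the implication
\[
\nabla\tilde{q}(\bar{p},\bar{x})^{T}\mu=0,\ \mu\in(\F_{1}-\F_{2})^{\circ}\ \Rightarrow\ \mu=0.
\]
The premise here decomposes into $\nabla_{1}\tilde{q}(\bar{p},\bar{x})^{T}\mu=0$ and $\nabla_{2}\tilde{q}(\bar{p},\bar{x})^{T}\mu=0$, so it is stronger than the premise of (\ref{eq-307}), which involves only the second partial. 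Consequently, (\ref{eq-307}) is a uniformly stronger condition than the one required by Proposition \ref{LemDirSubreg}, and the latter applies to deliver the directional metric subregularity of $F$ at $\bigl((\bar{p},\bar{x},\bar{\lambda}),0\bigr)$ in every admissible direction $(h,k,\eta)$ produced by (\ref{eq-500}).

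With both hypotheses of Theorem \ref{Thm.5} (equivalently, of Theorem \ref{Thm.4} together with the required subregularity) in place, I would simply invoke Theorem \ref{Thm.5} to conclude the Aubin property of $S$ around $(\bar{p},\bar{x})$ and the formula (\ref{eq-302}) for $DS(\bar{p},\bar{x})$. There is essentially no genuine obstacle in this proof; the only point worth checking carefully is the matching of quantifiers over the face pairs $(\F_{1},\F_{2})$ between Proposition \ref{LemDirSubreg}, Theorem \ref{Thm.5} and the present statement, which is identical in all three (the same critical cone $\K_{D}(\tilde{q}(\bar{p},\bar{x}),\bar{\lambda})$ and the same inclusion chain $\nabla\tilde{q}(\bar{p},\bar{x})(h,k)\in\F_{2}\subset\F_{1}\subset[\eta]^{\perp}$), so no additional work is needed.
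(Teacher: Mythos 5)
Your proposal is correct and follows essentially the same route as the paper: the paper's own (one-line) proof likewise observes that \eqref{eq-307} implies the directional metric subregularity of $F$ via Proposition \ref{LemDirSubreg} (whose premise involves the full gradient $\nabla\tilde q(\bar p,\bar x)^T\mu=0$ and is therefore implied by \eqref{eq-307}) and that \eqref{eq-307}, being stronger than \eqref{eq-305}, yields \eqref{eq-301} together with \eqref{eq-308} exactly as in Theorem \ref{Thm.5}. Your explicit check of the quantifier matching over the face pairs is a fine addition but introduces nothing beyond the paper's argument.
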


\proof
The proof easily follows from the observations that (\ref{eq-307}) implies both (\ref{eq-301}) and the metric subregularity of $F$ at $((\bar{p},\bar{x},\bar{\lambda}),0)$ in direction $(h,k,\eta)$ by virtue of Proposition \ref{LemDirSubreg}.
\endproof

We now give a simpler criterion for verifying condition \eqref{eq-307}. Consider the following lemma.
\begin{lemma}\label{LemAuxFaces}
  Let $K\subset \R^s$ be a convex polyhedral cone and let $v\in K$. Then for every pair $\F_1,\F_2$ of faces of $K$ with $v\in \F_2\subset \F_1$ there holds
  $(\F_1-\F_2)^\circ\subset\rm{sp\,}N_K(v)$.
\end{lemma}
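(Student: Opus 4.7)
The plan is to reduce the claim to the containment $N_{\mathcal{F}_1}(v)\subset{\rm sp\,}N_K(v)$ and then exploit the exposed-face description built into the paper's definition of a face. I would proceed in three short steps.

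First, I use the hypothesis $v\in\mathcal{F}_2\subset\mathcal{F}_1$ together with the fact that faces are convex cones, and hence contain $0$. Thus $v=v-0\in\mathcal{F}_1-\mathcal{F}_2$, $-v=0-v\in\mathcal{F}_1-\mathcal{F}_2$, and $\mathcal{F}_1=\mathcal{F}_1-\{0\}\subset\mathcal{F}_1-\mathcal{F}_2$. Polarizing the last inclusion gives $(\mathcal{F}_1-\mathcal{F}_2)^\circ\subset\mathcal{F}_1^\circ$, while the polar condition applied to $\pm v$ forces $\langle\mu,v\rangle=0$ for every $\mu\in(\mathcal{F}_1-\mathcal{F}_2)^\circ$. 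Combined,
\[
(\mathcal{F}_1-\mathcal{F}_2)^\circ\subset\mathcal{F}_1^\circ\cap[v]^\perp=N_{\mathcal{F}_1}(v),
\]
where the rightmost equality uses that $\mathcal{F}_1$ is a convex cone with $v\in\mathcal{F}_1$.

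Second, I relate $N_{\mathcal{F}_1}(v)$ to objects attached to $K$. By the definition of a face recalled just before Proposition \ref{LemDirSubreg}, there is some $z^*\in K^\circ$ with $\mathcal{F}_1=K\cap[z^*]^\perp$, and the inclusion $v\in\mathcal{F}_1$ yields $\langle z^*,v\rangle=0$, i.e.\ $z^*\in K^\circ\cap[v]^\perp=N_K(v)$. For the intersection of a polyhedral set with a linear subspace one has $T_{\mathcal{F}_1}(v)=T_K(v)\cap[z^*]^\perp$, and the polar-of-intersection identity for polyhedral cones (no closure needed) gives
\[
N_{\mathcal{F}_1}(v)=\bigl(T_K(v)\cap[z^*]^\perp\bigr)^\circ=N_K(v)+[z^*].
\]
Since $z^*\in N_K(v)\subset{\rm sp\,}N_K(v)$, both summands lie in ${\rm sp\,}N_K(v)$, so $N_{\mathcal{F}_1}(v)\subset{\rm sp\,}N_K(v)$, and the lemma follows from the chain of inclusions in the previous paragraph.

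The only non-routine ingredient is the polyhedral polar-of-intersection formula invoked above; I would either cite a standard reference such as \cite[Corollary 11.25]{RoWe98} or, to keep the argument self-contained, derive it from a facet description $K=\{x:\langle a_i,x\rangle\leq 0,\,i\in I\}$. In that alternative route one writes $\mathcal{F}_1=\mathcal{F}_{J_1}$ with $J_1\subset I(v):=\{i:\langle a_i,v\rangle=0\}$, expands any $\mu\in\mathcal{F}_1^\circ$ as $\mu=\sum_{i\notin J_1}\alpha_i a_i+\sum_{i\in J_1}\beta_i a_i$ with $\alpha_i\geq 0$, and uses $\langle\mu,v\rangle=0$ together with $\langle a_i,v\rangle\leq 0$ to conclude $\alpha_i=0$ for $i\notin I(v)$, whence $\mu\in{\rm span}\{a_i:i\in I(v)\}={\rm sp\,}N_K(v)$.
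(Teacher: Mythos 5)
Your proof is correct, and it takes a genuinely different route from the paper's. The paper argues entirely in an inequality representation $K=\{u\mid a_i^Tu\le 0,\ i=1,\dots,t\}$: it encodes $\F_1,\F_2$ by active index sets $I_1\subset I_2$ (after enlarging $I_2$ so that a point $\bar u\in\F_2$ with $a_i^T\bar u<0$ for $i\notin I_2$ exists), proves the exact identity $\F_1-\F_2=\{u\mid a_i^Tu=0,\ i\in I_1,\ a_i^Tu\le 0,\ i\in I_2\setminus I_1\}$, computes $(\F_1-\F_2)^\circ=\{\sum_{i\in I_2}\mu_ia_i\mid \mu_i\ge 0,\ i\in I_2\setminus I_1\}$, and concludes via $I_2\subset I(v)$ and ${\rm sp\,}N_K(v)={\rm span}\{a_i\mid i\in I(v)\}$. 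You instead never compute $\F_1-\F_2$: from $0\in\F_2$, $v\in\F_1\cap\F_2$ you get $\F_1\subset\F_1-\F_2$ and $\pm v\in\F_1-\F_2$, hence $(\F_1-\F_2)^\circ\subset\F_1^\circ\cap[v]^\perp=N_{\F_1}(v)$, and then the exposed-face description $\F_1=K\cap[z^*]^\perp$ with $z^*\in N_K(v)$ plus the polyhedral rules $T_{\F_1}(v)=T_K(v)\cap[z^*]^\perp$ and $(A\cap B)^\circ=A^\circ+B^\circ$ (no closure needed in the polyhedral setting) give $N_{\F_1}(v)=N_K(v)+[z^*]\subset{\rm sp\,}N_K(v)$. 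Your argument is shorter, free of index bookkeeping, in fact only uses $v\in\F_1\cap\F_2$ rather than $\F_2\subset\F_1$, and yields the slightly sharper bound $(\F_1-\F_2)^\circ\subset N_K(v)+[z^*]$; its only external ingredient is the polar-of-intersection formula for polyhedral cones, which is standard (do double-check the exact numbering in Rockafellar--Wets, though your self-contained Farkas-type fallback already covers it). What the paper's computation buys in exchange is an explicit inequality description of $\F_1-\F_2$ and its polar, the kind of concrete representation that is convenient when one actually has to identify the sets $\F_1-\F_2$ in conditions such as \eqref{eq-951}, as in the worked example; but for the lemma itself your reduction is fully adequate.
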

\begin{proof}
  Since $K$ is assumed to be convex polyhedral, there are finitely many vectors $a_1,\ldots, a_t\in \R^s$ such that $K=\{u\in\R^s\mv a_i^Tu\leq 0,\ i=1,\ldots,t\}$. Consider two faces $\F_1,\F_2$ of $K$ satisfying $v\in\F_2\subset\F_1$. Then we can find index sets $I_j\subset \{1,\ldots,t\}$, $j=1,2$, such that
  \[\F_j=\{u\mv a_i^Tu=0,\ i\in I_j,\ a_i^Tu\leq 0, i\in\{1,\ldots,t\}\setminus I_j\}, j=1,2\]
  and, by a possible enlargement of  $I_2$, there exist some $\bar u\in \F_2$ with
  \[a_i^T\bar u=0,\ i\in I_2,\ a_i^T\bar u<0,\ i\in\{1,\ldots,t\}\setminus I_2.\]
  Then $I_1\subset I_2$. Indeed, assuming on the contrary that there is some $\bar i\in I_1\setminus I_2$,  we have $a_{\bar i}^T\bar u=0$ because of $\bar u\in \F_2\subset \F_1$ and $\bar i\in I_1$. On the other hand we have $a_{\bar i}^T\bar u<0$ because of $\bar i\not\in I_2$ and this is clearly impossible. Hence $I_1\subset I_2$. Further we claim that
  \[\F_1-\F_2=R:=\{u\mv a_i^Tu=0,\ i\in I_1,\ a_i^Tu\leq 0,\ i\in I_2\setminus I_1\}.\]
  The inclusion $\F_1-\F_2\subset R$ immediately follows. To prove the opposite inclusion consider $u\in R$. Then we can choose $\lambda\geq 0$ large enough such that $u_1:=u+\lambda\bar u$ fulfills $a_i^Tu_1<0$, $i\in \{1,\ldots,t\}\setminus I_2$. Together with $a_i^Tu_1=a_i^Tu$, $i\in I_2$ and $u\in R$ it follows that $u_1\in\F_1$. Since $u_2:=\lambda \bar u\in\F_2$, we have $u=u_1-u_2\in\F_1-\F_2$ showing $R\subset\F_1-\F_2$. Thus our claim holds true and we obtain
  \[(\F_1-\F_2)^\circ=\Big\{\sum_{i\in I_2}\mu_ia_i\mv \mu_i\geq 0,\ i\in I_2\setminus I_1\Big\}.\]
  On the other hand we have $N_K(v)=\{\sum_{i\in I(v)}\mu_ia_i\mv \mu_i\geq 0, i\in I(v)\}$, where $I(v):=\{i\in\{1,\ldots,t\}\mv a_i^Tv=0\}$ and thus ${\rm sp\,}N_K(v)=\{\sum_{i\in I(v)}\mu_ia_i\mv \mu_i\in\R, i\in I(v)\}$. Because of $v\in \F_2$ we have $I_2\subset I(v)$ and the asserted inclusion $(\F_1-\F_2)^\circ\subset{\rm sp\,}N_K(v)$ follows.
\end{proof}
On the basis of Corollary \ref{cor.1} and Lemma \ref{LemAuxFaces} we can now state an efficient variant of Theorem \ref{Thm.5} in which the manipulation with faces of $\K_D(\tilde q(\pb,\xb),\lb)$ is reduced only to the verification of \eqref{eq-306}.
\begin{theorem}\label{Thm.6}
Assume that $\lb$ is the (unique) solution of system (\ref{eq-61}) and for every $h \in \mathbb{R}^{m}$ there is some $k \in\mathbb{R}^{n}$ and some $\eta \in \mathbb{R}^{s}$ fulfilling \eqref{eq-500}.

Further assume that for every nonzero pair $(h,k)\in \mathbb{R}^{m} \times \mathbb{R}^{n}$ and the corresponding (unique) $\eta \in \mathbb{R}^{s}$ satisfying (\ref{eq-500})
one has
\begin{enumerate}
\item[(i)]
\begin{equation}\label{eq-950}
\nabla_2\tilde q(\pb,\xb)^T\mu=0,\ \mu\in {\rm sp\,}\big(N_{\K_D(\tilde q(\pb,\xb),\lb)}(\nabla \tilde q(\pb,\xb)(h,k))\big)\ \Rightarrow\ \mu=0;
\end{equation}
\item[(ii)]
for every pair of faces $\mathcal{F}_{1}, \mathcal{F}_{2}$  of the critical cone $\mathcal{K}_{D} (\tilde{q}(\bar{p},\bar{x}),\bar{\lambda})$ with $\nabla \tilde{q}(\bar{p},\bar{x})(h,k) \in \mathcal{F}_{2} \subset \mathcal{F}_{1} \subset [\eta]^{\perp}$
and for every $w \neq 0$ with $b(\bar{p},\bar{x})w \in \mathcal{F}_{1}- \mathcal{F}_{2} $ there is some $\tilde{w}$ with $  \nabla_{2}\tilde{q}(\bar{p},\bar{x})\tilde{w} \in \mathcal{F}_{1}- \mathcal{F}_{2}$ and
\begin{equation}\label{eq-951}
w^{T} \nabla_{2} \Lag_\lb(\bar{p},\bar{x})\tilde{w}>0.
\end{equation}
\end{enumerate}
Then all assertions of Theorem \ref{Thm.4} remain valid.
\end{theorem}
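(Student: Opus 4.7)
The plan is to deduce Theorem \ref{Thm.6} from Corollary \ref{cor.1} by using Lemma \ref{LemAuxFaces} to replace the face-pair polar condition (\ref{eq-307}) with the single cleaner condition (\ref{eq-950}). The core observation is that (\ref{eq-307}) must hold for every admissible pair of faces $\F_1,\F_2$ of $\K_D(\tilde q(\pb,\xb),\lb)$, whereas (\ref{eq-950}) is formulated once with the fixed cone ${\rm sp\,}N_{\K_D(\tilde q(\pb,\xb),\lb)}(\nabla \tilde q(\pb,\xb)(h,k))$; Lemma \ref{LemAuxFaces} is exactly what says that this fixed cone dominates all the polars $(\F_1-\F_2)^\circ$ of interest.

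Concretely, I would fix a nonzero pair $(h,k)\in\R^m\times\R^n$ together with the unique corresponding $\eta$ produced by (\ref{eq-500}) and Remark \ref{Rem.1}, and then set
\[
K:=\K_D(\tilde q(\pb,\xb),\lb),\qquad v:=\nabla \tilde q(\pb,\xb)(h,k).
\]
Note $v\in K$ by (\ref{eq-500}). Consider any pair of faces $\F_1,\F_2$ of $K$ with $v\in\F_2\subset\F_1\subset[\eta]^\perp$. Lemma \ref{LemAuxFaces}, applied to $K$ at the vector $v\in\F_2$, gives the inclusion $(\F_1-\F_2)^\circ\subset{\rm sp\,}N_K(v)$. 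Hence any $\mu\in(\F_1-\F_2)^\circ$ with $\nabla_2\tilde q(\pb,\xb)^T\mu=0$ automatically lies in ${\rm sp\,}N_K(v)$ and satisfies the same kernel equation, so the hypothesis (\ref{eq-950}) forces $\mu=0$. This is precisely (\ref{eq-307}).

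Condition (ii) of Theorem \ref{Thm.6} is literally (\ref{eq-308}) of Corollary \ref{cor.1}, and the existence clause on $(k,\eta)$ solving (\ref{eq-500}) is shared by both statements. Consequently all hypotheses of Corollary \ref{cor.1} are met, and the desired conclusion—the Aubin property of $S$ together with the graphical derivative formula (\ref{eq-302})—follows at once. The only place where some care is needed is ensuring that the apex in Lemma \ref{LemAuxFaces} is chosen as $v=\nabla\tilde q(\pb,\xb)(h,k)$ (which does belong to $\F_2$ by the standing chain $v\in\F_2\subset\F_1\subset[\eta]^\perp$), and no further computation is required.
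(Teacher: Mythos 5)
Your proposal is correct and takes essentially the same route as the paper: the paper also deduces Theorem \ref{Thm.6} from Corollary \ref{cor.1} by applying Lemma \ref{LemAuxFaces} with $K=\K_D(\tilde q(\pb,\xb),\lb)$ and $v=\nabla \tilde q(\pb,\xb)(h,k)$, so that $(\F_1-\F_2)^\circ\subset{\rm sp\,}N_K(v)$ converts \eqref{eq-950} into \eqref{eq-307}. No gaps; your added remarks (that $v\in K$ by \eqref{eq-500} and that condition (ii) is literally \eqref{eq-308}) merely spell out what the paper leaves implicit.
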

\begin{proof}
  It follows immediately from Lemma \ref{LemAuxFaces} with $K=\K_D(\tilde q(\pb,\xb),\lambda)$ and $v=\nabla \tilde q(\pb,\xb)(h,k)$.
\end{proof}

The next example illustrates the application of the preceding result.
\begin{example}
  Consider the solution map $S$ of the variational system defined via \eqref{eq-50} with
  \[f(p,x)=\left[\begin{array}{c}x_1-p\\-x_2+x_2^2\end{array}\right],\ q(p,x,y)=\left[\begin{array}{c}p-x_1+2y_1-4y_2\\-x_1+2y_1+4y_2\end{array}\right],\ D=\R^2_-\]
  at the reference point $\pb=0$, $\xb=(0,0)$. Then
  \[b(p,x)=\left[\begin{array}
    {cc}2&-4\\ 2&4
  \end{array}\right],\quad \tilde q(p,x)=\left[\begin{array}
    {c}p+x_1-4x_2\\x_1+4x_2
  \end{array}\right]\]
  and (A) is fulfilled since $b(\pb,\xb)$ has full rank. Further,
  $\lb=(0,0)$ is the unique solution of \eqref{eq-61}, $\K_D(\tilde q(\pb,\xb),\lb)=D=\R^2_{-}$ and the system \eqref{eq-500}
  reads as
  \begin{equation}\label{EqExampleDir}\left[\begin{array}{c}0\\0\end{array}\right]
  =\left[\begin{array}{c} -h+k_1\\-k_2  \end{array}\right]+\left[\begin{array}{cc} 2&2\\-4&4\end{array}\right]\eta,\ \eta\in N_{\R^2_-}\left(\left[\begin{array}
  {c}h+k_1-4k_2\\ k_1+4k_2\end{array}\right]\right).\end{equation}
  Straightforward calculations yield that for every $h\in \R$ the set $T(h):=\{(k,\eta)\in\R^2\times\R^2\mv (h,k,\eta) \mbox{ fulfills }\eqref{EqExampleDir}\}$ is not empty and
  \begin{equation}\label{EqEx_T}T(h)=\begin{cases}\Big\{\big((h,0),(0,0)\big), \big((\frac 87 h,-\frac 27h),(0,-\frac 1{14}h) \big), \big((\frac 97 h,\frac 47 h),(-\frac 17 h,0)\big)\Big\}&\mbox{if $h<0$,}\\
  \Big\{\big((-\frac 12 h,\frac 18 h), (\frac{23}{64}h,\frac{25}{64}h)\big)\Big\}&\mbox{if $h\geq 0$.}\end{cases}\end{equation}
  Thus for every $h\in\R$ there is some pair $(k,\eta)\in\R^2\times\R^2$ fulfilling \eqref{eq-500} and we shall now show  that the other assumptions of Theorem \ref{Thm.6} are fulfilled as well. Note that \eqref{eq-950} always holds because the matrix
  \[\nabla_2 \tilde q(\pb,\xb)=\left[\begin{array}{cc}1&-4\\1&4 \end{array}\right]\]
    has full rank. According to \eqref{EqEx_T} we have to consider the following four cases.\\
    \indent{\bf Case (i):}  $h>0 $, $k=(-\frac 12 h,\frac 18 h)$, $\eta= (\frac{23}{64}h,\frac{25}{64}h)$. Evidently, $\F_2=\F_1=\{0\}$ is the only face of $\K_D(\tilde q(\pb,\xb),\lb)=\R^2_{-}$ contained in $[\eta]^\perp$. Thus $w=0$ is the solely  element satisfying $b(\pb,\xb)w\in\F_1-\F_2=\{0\}$ and we are done.\\
    \indent{\bf Case (ii):} $h<0$, $k=(h,0)$, $\eta=(0,0)$. In this case we have the requirement
    \[\nabla \tilde q(\pb,\xb)(h,k)=\left[\begin{array}{c}2h\\h \end{array}\right]\in \F_2\subset \F_1\subset\R^2\]
    resulting in $\F_2=\F_1=\R^2_-$ and $\F_1-\F_2=\R^2$. Consider any $w\in\R^2\setminus \{0\}$. Then, by taking $\tilde w=(w_1,-w_2)$ we have $\nabla_2\tilde q(\pb,\xb)\tilde w\in\F_1-\F_2$ and $w^T\nabla_2\Lag_\lb(\pb,\xb)\tilde w=w_1^2+w_2^2>0$ showing the validity of \eqref{eq-951}.\\
    \indent{\bf Case (iii):} $h<0$, $k=(\frac 87 h,-\frac 27h)$, $\eta=(0,-\frac 1{14}h)$. In this case we conclude from the condition
    \[\nabla \tilde q(\pb,\xb)(h,k)=\left[\begin{array}{c}\frac {23}7 h\\0 \end{array}\right]\in \F_2\subset \F_1\subset[\eta]^\perp\]
    that $\F_2=\F_1=\R_-\times\{0\}$ and thus $\F_1-\F_2=\R\times \{0\}$. Any $w\not=0$ with $b(\pb,\xb)w\in\F_1-\F_2$ satisfies $w_2=-\frac{w_1}2\not=0$ and by choosing $\tilde w=(w_1, -\frac{w_1}4)$ we have $\nabla_2\tilde q(\pb,\xb)\tilde w\in\F_1-\F_2$ and $w^T\nabla_2\Lag_\lb(\pb,\xb)\tilde w=w_1^2-w_2\tilde w_2=\frac 78 w_1^2>0$ verifying again \eqref{eq-951}.\\
    \indent{\bf Case (iv):} $h<0$, $k=(\frac 97 h,\frac 47 h)$, $\eta=(-\frac 17 h,0)$. In this case the faces $\F_1,\F_2$ satisfying
    \[\nabla \tilde q(\pb,\xb)(h,k)=\left[\begin{array}{c}0\\\frac{25}7 h \end{array}\right]\in \F_2\subset \F_1\subset[\eta]^\perp\]
    are $\F_1=\F_2=\{0\}\times\R_-$ and thus $\F_1-\F_2=\{0\}\times\R$. Any $w\not=0$ with $b(\pb,\xb)w\in\F_1-\F_2$ satisfies $w_2=\frac{w_1}2\not=0$ and  $\tilde w=(w_1, \frac{w_1}4)$ fulfills $\nabla_2\tilde q(\pb,\xb)\tilde w\in\F_1-\F_2$ and $w^T\nabla_2\Lag_\lb(\pb,\xb)\tilde w=w_1^2-w_2\tilde w_2=\frac 78 w_1^2>0$. Hence, \eqref{eq-951} holds in this case as well.\\
    Thus all assumptions of Theorem \ref{Thm.6} are fulfilled and the solution map $S$ has the Aubin property around $(\pb,\xb)$.

   Note that this result cannot be obtained by condition (5.2) from \cite{MO} which attains the form
    \[
    \begin{split}
     p^{*} & = -   v_{1} + w_{1}\\
     0 &  =  v_{1} + w_{1} + w_{2}\\
     0 &  = -  v_{2} + 4 w_{1}+ 4 w_{2}
    \end{split} w \in D^{*}N_{\mathbb{R}^{2}_{-}}(0,0)
    \left(\left[\begin{split}
    & 2 v_{1}- 4 v_{2}\\
    & 2 v_{1}+ 4 v_{2}
    \end{split}\right]\right) \Rightarrow
    p^{*}= v_{1} = v_{2} = 0.
    \]
    Indeed, the relations on the left-hand side have, e.g., the nontrivial solution $v_{1}=-1,\ v_{2}=-0.5,\ p^{*}= \frac{25}{16}$.
\end{example}


\section*{Conclusion}
The paper contains a thorough analysis of a parameterized variational system with implicit constraints. One can say that Boris Mordukhovich stands behind most important ingredients used in  this development. Indeed, as pointed out in the Introduction, the model came from \cite{QVI} and the results in Section 4 are in fact directional variants of their counterparts in \cite[Section 3]{QVI}. Furthermore, the development of the directional limiting calculus has been initiated in \cite{GiMo} and also Theorem \ref{Thm.1} \cite[Theorem 4.4]{GO3} relies essentially on the so-called Mordukhovich criterion \cite[Chapter 9F]{RoWe98}. Thus, via this research  the authors would like to give credit to their friend Boris on the occasion of his 70th birthday.

\section*{Acknowledgements} The research of the first author was  supported by the Austrian Science Fund (FWF) under grant P29190-N32. The  research of the second author was supported by the Grant Agency of the Czech Republic, Projects 17-08182S and 17-04301S and the Australian Research Council, Project DP160100854.

\end{document}